\documentclass[leqno]{amsart}
\usepackage{amsmath,amsfonts,amsthm,amssymb,indentfirst,epic,url}

\setlength{\textwidth}{6.5in}
\setlength{\textheight}{9in}
\setlength{\evensidemargin}{0in}
\setlength{\oddsidemargin}{0in}
\setlength{\topmargin}{-.5in}
\sloppy

\setlength{\mathsurround}{1.67pt}
\newcommand{\<}{\kern.0833em}
\newcommand{\strt}[1][1.7]{\vrule width0pt height0pt depth#1pt}

\newtheorem{theorem}{Theorem}
\newtheorem{lemma}[theorem]{Lemma}
\newtheorem{corollary}[theorem]{Corollary}
\newtheorem{proposition}[theorem]{Proposition}
\newtheorem{definition}[theorem]{Definition}
\newtheorem{question}[theorem]{Question}
\newtheorem{example}[theorem]{Example}

\newcommand{\En}{\mathrm{Endo}}
\newcommand{\lm}{\varprojlim}

\raggedbottom

\begin{document}

\begin{center}
\texttt{Comments, corrections, and related references welcomed,
as always!}\\[.5em]
{\TeX}ed \today
\vspace{2em}
\end{center}

\title%
{Homomorphic images of pro-nilpotent algebras}
\thanks{This preprint is accessible online at
\url{http://math.berkeley.edu/~gbergman/papers/}.
}

\subjclass[2010]{Primary: 16N20, 16N40, 17A01, 18A30.
Secondary: 13C13, 17B30.} 
\keywords{Homomorphic image of an inverse limit of nilpotent algebras,
multiplication algebra of a nonassociative algebra,
Jacobson radical associative algebra, Hopfian module,
solvable Lie algebra.
}

\author{George M. Bergman}
\address{University of California\\
Berkeley, CA 94720-3840, USA}
\email{gbergman@math.berkeley.edu}

\begin{abstract}
It is shown that any finite-dimensional homomorphic image of an
inverse limit of nilpotent not-necessarily-associative algebras
over a field is nilpotent.
More generally, this is true of algebras over a general commutative
ring $k,$ with ``finite-dimensional'' replaced by ``of
finite length as a $\!k\!$-module''.

These results are obtained by considering the multiplication algebra
$M(A)$ of an algebra $A$ (the associative algebra of $\!k\!$-linear maps
$A\to A$ generated by left and right multiplications by elements
of $A),$ and its behavior with respect
to nilpotence, inverse limits, and homomorphic images.

As a corollary, it is shown that a finite-dimensional homomorphic
image of an inverse limit of finite-dimensional solvable
Lie algebras over a field of characteristic~$0$ is solvable.

Examples are given showing that {\em infinite}-dimensional
homomorphic images of inverse limits of nilpotent algebras can have
properties far from those of nilpotent algebras; in particular,
properties that imply that they are not residually nilpotent.

Several open questions and directions for further
investigation are noted.
\end{abstract}
\maketitle

\section{General definitions.}\label{S.defs}

Throughout this note, $k$ will be a commutative associative unital ring,
and an ``algebra'' will mean a $\!k\!$-algebra;
i.e., a $\!k\!$-module $A$ given with a $\!k\!$-bilinear
multiplication $A\times A\to A,$ not necessarily associative or unital.

Recall that if $A$ is a nonunital {\em associative} algebra contained
in a {\em unital} associative algebra $A',$ then the identity
\begin{equation}\begin{minipage}[c]{35pc}\label{d.(1+x)(1+y)}
$(1+x)(1+y)\ =\ 1+(x+y+xy)\quad (x,y\in A)$
\end{minipage}\end{equation}
which holds in $A'$ motivates
one to define, on $A,$ the operation of {\em quasimultiplication},
\begin{equation}\begin{minipage}[c]{35pc}\label{d.quasimult}
$x*y\ =\ x+y+xy.$
\end{minipage}\end{equation}
This is again associative, and has $0$ as identity element;
an element $x\in A$ is called {\em quasiinvertible} if
there exists $y\in A$ such that $x*y=y*x=0;$ equivalently,
if $1+x$ is invertible in the multiplicative
submonoid $\{1+u\mid u\in A\}$ of $A'.$
In particular, every nilpotent element $x\in A$ is
quasiinvertible, with quasiinverse $-x+x^2-\dots+(-x)^n+\cdots\,.$
The Jacobson radical of $A$ is the largest ideal consisting of
quasiinvertible elements; so an associative algebra is
Jacobson radical if and only if every element is quasiinvertible.
We shall write ``Jacobson radical'' and ``radical'' interchangeably
in this note, using the former mainly in statements of results.
We shall only use these terms in reference to associative algebras.

If $A$ is a not-necessarily-associative algebra,
let us write $\En(A)$ for the associative unital $\!k\!$-algebra
of all endomorphisms of $A$ as a $\!k\!$-module.
(Since $\mathrm{End}(A)$ should denote the set of algebra
endomorphisms, we use this slightly different symbol for the algebra
of module endomorphisms.)
For every $x\in A,$ we define the left and right multiplication maps
$l_x,\ r_x\in\En(A)$ by
\begin{equation}\begin{minipage}[c]{35pc}\label{d.rl}
$l_x(y)\ =\ x\,y,\quad r_x(y)\ = \ y\,x,$
\end{minipage}\end{equation}
and denote by $M(A)$ the generally {\em nonunital} subalgebra
of $\En(A)$ generated by these maps, as $x$ runs over~$A;$ this is
called the {\em multiplication algebra} of $A$ \cite[p.14]{Schafer}.

An algebra $A$ is called {\em nilpotent} if for some $n>0,$ all
length-$\!n\!$
products of elements of $A,$ no matter how bracketed, are zero.
We shall see that $M(A)$ is nilpotent if and only if $A$ is
nilpotent (not hard to prove, but not quite trivial either).

\section{Preview of the idea of the proof of our main result, and of a counterexample.}\label{S.preview}

If $A$ is a nilpotent algebra, then the associative algebra
$M(A),$ being nilpotent, will in particular be radical.
Now though the property of being nilpotent is not preserved by inverse
limits, that of being radical is, and is
likewise preserved under surjective homomorphisms.
To use these facts, we have to know how $M(A)$
behaves with respect to homomorphisms and inverse limits.

In general, a homomorphism of algebras $h:A\to B$ does not induce a
homomorphism $M(h):M(A)\to M(B);$ but we shall see that
it does if $h$ is surjective, and that $M(h)$ is then also surjective.
The need for $h$ to be surjective will not be a problem for us, because
if an algebra $A$ is an inverse limit of nilpotent algebras $A_i,$
then by replacing the $A_i$ with appropriate subalgebras, we can
get a new system having the same inverse limit $A,$ and such
that the new projection maps $A\to A_i$ and connecting
maps $A_i\to A_j$ are surjective.
Once these conditions hold, we shall find that
\begin{equation}\begin{minipage}[c]{35pc}\label{d.Mlm}
$M(\lm A_i)\ \subseteq\ \lm M(A_i)\ \subseteq\ \En(\lm A_i).$
\end{minipage}\end{equation}
Hence, if the $A_i$ are all nilpotent, the elements
of $M(\lm A_i)$ will all have quasiinverses in the radical
algebra $\lm M(A_i),$ and hence in $\En(\lm A_i).$
From this we shall be able to deduce that if $B$ is a homomorphic
image of $A=\lm A_i,$ then for all $u\in M(B),$ the
linear map $1+u\in\En(B)$ is surjective.

If, moreover, $B$ has finite length as a $\!k\!$-module,
this surjectivity makes these
maps $1+u$ $(u\in M(B))$ invertible; i.e., it makes
the elements $u$ quasiinvertible in $\En(B).$
If we could say that they were quasiinvertible in $M(B),$
this would make $M(B)$ radical.
We can't initially say that; but we shall find that the
quasiinvertibility of these images in $\En(B)$
allows us to extend the domain of our map $M(A)\to\En(B)$ to a radical
subalgebra of $\En(A)$ containing $M(A).$
Since a homomorphic image of a radical algebra is radical,
we get a radical subalgebra of $\En(B)$ containing $M(B).$
Using once more the finite length assumption on $B,$ we will conclude
that that subalgebra of $\En(B)$ is nilpotent, hence so is $M(B);$
hence so is $B,$ yielding our main result (first paragraph of abstract).

It is curious that in the above development, before
assuming that $B$ had finite length, we could conclude that the maps
$1+u$ $(u\in M(B))$ were surjective, but not that they were invertible.
Let me sketch a concrete example (to be given in detail in \S\ref{S.eg})
showing how injectivity can fail, and why surjectivity must
nonetheless hold.

Suppose one takes an inverse limit $A$ of nilpotent associative
algebras $A_i,$ and divides out by the two-sided ideal $(r)$
generated by an element of the form
\begin{equation}\begin{minipage}[c]{35pc}\label{d.r=y-xyz}
$r\ =\ y\,-\,x\,y\,z\ =\ (1-l_x r_z)(y),$
\end{minipage}\end{equation}
where $x,y,z\in A.$
In the resulting algebra $A/(r),$ let us, by abuse of notation, use
the same symbols
$x,y,z$ for the images of the corresponding elements of $A.$
Thus, in that algebra we have $y=x\,y\,z;$ equivalently, $y$
is annihilated by the operator $1-l_x r_z.$
This will show that the latter operator is not injective if
we can show that $y\neq 0$ in $A/(r),$ in other words,
that $y\notin(r)$ in $A.$

Now we can formally solve~(\ref{d.r=y-xyz}) for $y,$ getting
$y=r+xrz+x^2rz^2+\dots\,;$ and in the nilpotent algebras $A_i$ of which
$A$ is the inverse limit, that equation is literally true,
since the images of $x$ and $z$ are nilpotent; so the image of $y$
in each of those algebras does lie in the image of $(r).$
But as we pass to larger and larger algebras $A_i,$
the number of terms needed in this solution can grow without bound,
so that there is no evident way to express $y\in A$ as a member
of the ideal $(r);$ and indeed,
we shall show in \S\ref{S.eg} that for appropriate choice
of these algebras and elements, it does not belong to
that ideal, so that on $A/(r),$ $1-l_x r_z$ is non-injective.
By the above considerations,
this makes $A/(r)$ non-residually-nilpotent; a quicker way
to see this is to note that the equations $y=x\,y\,x=x^2y\,x^2=\dots$
show that $y\in\bigcap_n (A/(r))^n,$ whence its image in any nilpotent
homomorphic image of $A/(r)$ must be zero.

On the other hand, I claim that whenever $x$ and $z$ are elements
of a homomorphic image $A/U$ of an inverse limit $A$ of nilpotent
associative
algebras $A_i,$ the operator $1-l_x r_z$ will be {\em surjective}.
Given an element $y\in A/U$ which we want to show is in the range
of this operator, let us lift $x,\,y,z$ to elements of $A,$
which we will denote by the same symbols.
Seeking an element $w\in A$ mapped by $1-l_x r_z$ to $y,$
we get the same sort of formal expression as before,
$w=y+xyz+x^2yz^2+\dots\,.$
Again, this sum cannot be evaluated using the
algebra operations of $A;$ but it can in each of the $A_i,$
and we find that the resulting elements of the $A_i$ yield, in
the inverse limit algebra $A,$
an element $w$ satisfying $y=w-xwx,$ as desired.

\section{Acknowledgements, and some related literature.}\label{S.ackn}

I am indebted to Nazih Nahlus for conjecturing the main result
of this note in the case where the $A_i$ are finite-dimensional
Lie algebras over an algebraically closed field of characteristic~$0.$
I am also grateful to Georgia Benkart, Christian Jensen,
Karl H. Hofmann, Greg Marks and Nazih Nahlus
for helpful comments on earlier drafts of this note.

For some related results on homomorphic images of {\em direct products}
of algebras, see~\cite{prod_Lie1},~\cite{prod_Lie2}.

In \cite{KHH+SAM}, the structures of inverse limits of
finite-dimensional Lie groups and Lie algebras are studied,
though with somewhat different emphases from this note, focusing on the
inverse limit topology, and continuous homomorphisms.

\section{Basic properties of nilpotence.}\label{S.np}

The condition of nilpotence for a nonassociative algebra $A$ can
be characterized in several ways.

In what follows, whenever $B$ and $C$ are $\!k\!$-submodules
of $A,$ we understand $BC$ to mean the $\!k\!$-submodule of $A$
spanned by all products $bc$ $(b\in B,\,c\in C).$
Let us define recursively $\!k\!$-submodules
$A_{[n]}$ and $A_{(n)}$ $(n=1,2,\dots)$ of any algebra $A$ by
\begin{equation}\begin{minipage}[c]{35pc}\label{d.weak_series}
$A_{[1]}\ =\ A,\qquad A_{[n+1]}\ =\  A\,A_{[n]}+A_{[n]}\,A,$
\end{minipage}\end{equation}
\begin{equation}\begin{minipage}[c]{35pc}\label{d.strong_series}
$A_{(1)}\,\ =\ A,\qquad A_{(n+1)}\ =
\ \sum_{0<m<n+1}\,A_{(m)}\,A_{(n+1-m)}.$
\end{minipage}\end{equation}

It is easy to see by induction that these yield
descending chains of submodules:
\begin{equation}\begin{minipage}[c]{35pc}\label{d.descend}
for $n>0,$\quad $A_{[n]}\ \supseteq\ A_{[n+1]}$\quad
and\quad $A_{(n)}\ \supseteq\ A_{(n+1)},$
\end{minipage}\end{equation}
and also that
\begin{equation}\begin{minipage}[c]{35pc}\label{d.[]<()}
for all $n,\quad A_{[n]}\ \subseteq\ A_{(n)}.$
\end{minipage}\end{equation}

If $A$ is associative, then $A_{[n]}$ and $A_{(n)}$ clearly
coincide, their common value being the submodule of $A$ spanned
by all $\!n\!$-fold products, which we shall write $A^n.$
In the next lemma, for an arbitrary algebra $A,$
we apply this notation to the associative
algebra $M(A)\subseteq\En(A),$ defined in~\S\ref{S.defs}.

\begin{lemma}\label{L.nilp}
If $A$ is an algebra, then the following conditions are equivalent:
\vspace{.2em}

\strt\ \textup{(i)}~ There exists a positive integer $n_1$ such
that $A_{[n_1]}=\{0\}.$
\vspace{.2em}

\strt\,\textup{(ii)}~ There exists a positive integer $n_2$ such
that $A_{(n_2)}=\{0\}.$
\vspace{.2em}

\textup{(iii)}~ There exists a positive integer $n_3$ such
that $M(A)^{n_3}=\{0\}.$
\vspace{.2em}

Moreover, if the above equivalent conditions hold, then
letting $N_1,\ N_2,\ N_3$ be the smallest values of $n_1,\ n_2,\ n_3$
for which the equations in those conditions are satisfied, we have
\begin{equation}\begin{minipage}[c]{35pc}\label{d.N1N2N3}
$N_3\ =\ \max(1,\,N_1-1),\qquad N_1\ \leq\ N_2\ \leq\ 2^{N_1-2}+1.$
\end{minipage}\end{equation}
\end{lemma}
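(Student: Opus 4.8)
The plan is to reduce all three conditions to the vanishing of powers of the $k$-submodule $M_1\subseteq M(A)$ spanned by the operators $l_x,r_x$ $(x\in A)$. First I would prove by induction on $n$ the identity $A_{[n+1]}=M_1^{\,n}(A)$ for $n\ge 0$ (with $M_1^{\,0}(A)=A$), where $M_1^{\,n}(A)$ denotes the span of all $w(a)$ with $w$ an $\!n\!$-fold product of elements of $M_1$ and $a\in A$. The base case is $A_{[1]}=A$, and the step is immediate from \eqref{d.weak_series}, since $A_{[n+2]}=A\,A_{[n+1]}+A_{[n+1]}\,A$ is exactly the span of the $l_x(w)$ and $r_x(w)$ with $w\in A_{[n+1]}$. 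As $M(A)$ is generated by $M_1$ we have $M(A)^n=\sum_{m\ge n}M_1^{\,m}$, and since $M_1^{\,m}=M_1^{\,m-n}M_1^{\,n}$ for $m\ge n$, the vanishing of $M(A)^n$ is equivalent to that of $M_1^{\,n}$; moreover $M_1^{\,n}=0$ iff $M_1^{\,n}(A)=0$, the elements of $M_1^{\,n}$ being maps on $A$. Hence for $n\ge 2$ the conditions $A_{[n]}=0$, $M_1^{\,n-1}=0$, and $M(A)^{n-1}=0$ are all equivalent, giving (i)~$\Leftrightarrow$~(iii); comparing least indices (and disposing of the case $A=0$, where all three indices equal $1$) yields $N_3=\max(1,N_1-1)$.

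For (i)~$\Leftrightarrow$~(ii), the inclusion $A_{[n]}\subseteq A_{(n)}$ gives (ii)~$\Rightarrow$~(i) together with $N_1\le N_2$. The content is the reverse bound $N_2\le 2^{N_1-2}+1$, for which I would locate the strong series inside the weak series. The auxiliary fact I would use is that, because the weak series descends \eqref{d.descend} (so $A_{[a]}\subseteq A_{[1]}=A$),
\[
A_{[a]}\,A_{[b]}\ \subseteq\ A\,A_{[b]}+A_{[a]}\,A\ \subseteq\ A_{[\max(a,b)+1]}\qquad(a,b\ge 1);
\]
that is, a product of two weak-series terms lies only one step deeper than the shallower factor.

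Using this together with the recursion \eqref{d.strong_series}, I would show by strong induction on $n$ that
\[
A_{(n)}\ \subseteq\ A_{[\,\lceil\log_2 n\rceil+1\,]}.
\]
Each summand $A_{(m)}A_{(n-m)}$ of $A_{(n)}$ lies, by the inductive hypothesis and the auxiliary fact, in $A_{[\max(h(m),h(n-m))+1]}$, where $h(j)=\lceil\log_2 j\rceil+1$; since the weak series is nested, the whole sum lies in the largest of these terms, i.e.\ the one of least index, which is attained at the balanced split $m=\lceil n/2\rceil$ and produces index $h(n)$. Equivalently, writing $T(c)$ for the largest $n$ with $A_{(n)}\subseteq A_{[c]}$ forced this way, balancing gives the doubling recursion $T(c)=2\,T(c-1)$, $T(1)=1$, whence $T(c)=2^{c-1}$. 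Therefore $A_{(n)}\subseteq A_{[N_1]}=0$ as soon as $\lceil\log_2 n\rceil+1\ge N_1$, i.e.\ as soon as $n>2^{N_1-2}$; taking $n=2^{N_1-2}+1$ gives $N_2\le 2^{N_1-2}+1$.

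The main obstacle is the step just described. The naive guess $A_{(n)}\subseteq A_{[n]}$ is false — for instance $A_{(4)}\supseteq A_{(2)}A_{(2)}=A_{[2]}A_{[2]}$ need not lie in $A_{[4]}$ — and the reason is exactly the auxiliary fact: multiplying two deep weak-series terms deepens by only $1$, not additively. This forces the balanced, divide-and-conquer splitting and leads to the exponential threshold $2^{N_1-2}+1$ rather than a linear one; extracting this optimal constant, rather than merely some finite bound, is where the care lies.
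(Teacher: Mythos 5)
Your proposal is correct in substance and follows essentially the same route as the paper: you get (i)$\Leftrightarrow$(iii) and $N_3=\max(1,N_1-1)$ from the identity $A_{[n+1]}=M_1^{\,n}(A)$ (the paper writes your $M_1$ as $l_A+r_A$ and organizes the power computation as $M(A)^{n+1}=(l_A+r_A)\,M(A)^n$), and your containment $A_{(n)}\subseteq A_{[\lceil\log_2 n\rceil+1]}$ is exactly the paper's inclusion~(\ref{d.2^n-2+1}) in logarithmic form, proved by the same observation that in any split of the index, one factor has at least the balanced size.

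However, one displayed step is false as written and needs repair, because taken literally it destroys your induction. You justify the auxiliary fact by
\[
A_{[a]}\,A_{[b]}\ \subseteq\ A\,A_{[b]}+A_{[a]}\,A\ \subseteq\ A_{[\max(a,b)+1]},
\]
but the second inclusion is wrong: by~(\ref{d.weak_series}) and the nesting~(\ref{d.descend}), the middle sum satisfies only $A\,A_{[b]}+A_{[a]}\,A\subseteq A_{[b+1]}+A_{[a+1]}=A_{[\min(a,b)+1]}$. Concretely, for $a=1,\ b=2$ the middle expression contains $A_{[1]}A=A_{[2]},$ which need not lie in $A_{[3]}$. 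And the ``min'' version is useless: it places the summand $A_{(1)}A_{(n-1)}$ of $A_{(n)}$ merely in $A_{[2]},$ so the strong induction would give no bound at all. The fact you actually need (and use) is true, but via intersection rather than sum:
\[
A_{[a]}\,A_{[b]}\ \subseteq\ (A\,A_{[b]})\cap(A_{[a]}\,A)\ \subseteq\ A_{[b+1]}\cap A_{[a+1]}\ =\ A_{[\max(a,b)+1]},
\]
the last equality again by~(\ref{d.descend}). (Correspondingly, your gloss ``one step deeper than the shallower factor'' should say ``deeper factor.'') With this one-line correction your argument is complete. Note that the paper sidesteps the need for any such auxiliary lemma by first using the descending property of the {\em strong} series: in each summand the deep factor $A_{(m)},$ $m\geq 2^{c-2}+1,$ is replaced by $A_{(2^{c-2}+1)}$ and the other factor by $A,$ so every summand lands in $A_{(2^{c-2}+1)}A+A\,A_{(2^{c-2}+1)}\subseteq A_{[c]}A+A\,A_{[c]}=A_{[c+1]},$ where the sum is harmless because the deep index is the same in both terms.
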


\begin{proof}
We will first establish the stated relations between
conditions~(i) and~(iii), and between $N_1$ and $N_3.$
Let $l_A\subseteq M(A)$ denote the $\!k\!$-submodule of all
left-multiplication operators $l_x$ $(x\in A),$ and
$r_A$ the $\!k\!$-submodule of all right-multiplication operators $r_x.$
We claim that
\begin{equation}\begin{minipage}[c]{35pc}\label{d.M(A)^n+1}
for all $n>0,$\quad $M(A)^{n+1}\ =\ (l_A+r_A)\,M(A)^n.$
\end{minipage}\end{equation}
Here ``$\supseteq$'' is clear.
To see ``$\subseteq$'', note that $M(A)$ consists of all sums
of products of one or more elements of $l_A + r_A,$
hence $M(A)^{n+1}$ consists of all sums of products
of $n+1$ or more such elements.
If such a product has more than $n+1$ such factors, we can, in view of
associativity, group them
into $n+1$ subproducts, of which the first is a single factor.
(The assumption $n>0$ assures us that the first of $n+1$
factors is not the only one.)
So written, our product clearly belongs to $(l_A+r_A)M(A)^n,$
proving~``$\subseteq$''.

Now the recursive step
of~(\ref{d.weak_series}) says that $A_{[n+1]}=(l_A+r_A)A_{[n]},$
so using~(\ref{d.M(A)^n+1}), and induction from the case $n=1,$
one concludes that
\begin{equation}\begin{minipage}[c]{35pc}\label{d.M^nA}
for all $n>0,$\quad $A_{[n+1]}\ =\ M(A)^n(A)$
\end{minipage}\end{equation}
This gives the equivalence of~(i) and~(iii) on the
one hand, and the initial equality of~(\ref{d.N1N2N3}) on the other.

Turning to the submodules $A_{(n)},$ the inclusion~(\ref{d.[]<()})
yields the implication (ii)$\!\implies\!$(i) and the first
inequality of~(\ref{d.N1N2N3}).
To get the reverse implication and the final
inequality of~(\ref{d.N1N2N3}), we first note that these two
statements hold trivially if $A=\{0\},$ in which case $N_1=N_2=1.$
To prove them for nonzero $A,$ in which case any $n_1$ as
in~(i), or $n_2$ as in~(ii), must be $\geq 2,$ it suffices to show that
\begin{equation}\begin{minipage}[c]{35pc}\label{d.2^n-2+1}
for $n\geq 2,$\quad $A_{(2^{n-2}+1)}\ \subseteq\ A_{[n]}.$
\end{minipage}\end{equation}

For $n=2,$ we have equality.
Assuming we know~(\ref{d.2^n-2+1}) for some $n\geq 2,$ we look at
the definition of $A_{(2^{n-1}+1)}$
as in~(\ref{d.strong_series}), and note that in each of
the summands $A_{(m)}A_{((2^{n-1}+1)-m)},$ one of the indices
$m$ or $(2^{n-1}+1)-m$ will be $\geq 2^{n-2}+1,$ while the other
will be at least $1;$ hence the summand will be contained
in $A_{(2^{n-2}+1)}A+A\,A_{(2^{n-2}+1)}.$
By inductive hypothesis,
this is $\subseteq A_{[n]}A+A\,A_{[n]}=A_{[n+1]},$ as required.
\end{proof}

(If we think of an arbitrarily parenthesized nonassociative
product as representing
a binary tree of multiplications, the last part of the above proof
is essentially a calculation
showing that a binary tree with $2^{n-2}+1$ leaves $(n\geq 2)$
must contain a chain with $n$ nodes.)

\begin{definition}\label{D.nilp}
An algebra $A$ satisfying the equivalent conditions
of Lemma~\ref{L.nilp} will be called {\em nilpotent}.
\end{definition}

Lemma~\ref{L.nilp} now gives

\begin{corollary}\label{C.nilpiff}
For any algebra $A,$ $M(A)$ is nilpotent if and only if
$A$ is nilpotent.\qed
\end{corollary}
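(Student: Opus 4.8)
The plan is to reduce everything to Lemma~\ref{L.nilp}, using the single extra observation that $M(A)$ is an \emph{associative} algebra, so that the notion of nilpotence supplied by Definition~\ref{D.nilp} simplifies drastically when applied to it. First I would recall the remark made just before Lemma~\ref{L.nilp}: for an associative algebra $B$, the submodules $B_{[n]}$ and $B_{(n)}$ both coincide with the span $B^n$ of all $\!n\!$-fold products. Hence, when Definition~\ref{D.nilp} is applied to the associative algebra $B=M(A)$, the three a priori distinct conditions of Lemma~\ref{L.nilp} all collapse, and ``$M(A)$ is nilpotent'' unwinds to the single requirement that $M(A)^{n}=\{0\}$ for some positive integer $n$.

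But that is exactly condition~(iii) of Lemma~\ref{L.nilp}, stated for the original algebra $A$. By that lemma, condition~(iii) is equivalent to condition~(i); and condition~(i), again by Definition~\ref{D.nilp}, is precisely the assertion that $A$ is nilpotent. Chaining these equivalences—``$M(A)$ nilpotent'' $\iff$ ``$M(A)^n=\{0\}$ for some $n$'' $\iff$ (i) $\iff$ ``$A$ nilpotent''—yields the corollary at once.

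There is no genuine obstacle here, since the corollary is designed to be an immediate reading-off of the equivalence (i)$\iff$(iii) already established. The one point that must not be glossed over is the reduction carried out in the first paragraph: one has to notice that nilpotence of the associative algebra $M(A)$, in the general sense of Definition~\ref{D.nilp}, means nothing more than $M(A)^n=\{0\}$, matching the formulation of condition~(iii) verbatim. Once that identification is made, the quantitative bounds of~(\ref{d.N1N2N3}) play no role whatsoever, and the proof is essentially a citation.
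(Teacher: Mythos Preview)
Your proposal is correct and matches the paper's approach exactly: the paper states the corollary with a \qed\ immediately after ``Lemma~\ref{L.nilp} now gives,'' so the intended argument is precisely the citation of (i)$\iff$(iii) that you spell out, together with the observation that for the associative algebra $M(A)$ nilpotence reduces to $M(A)^n=\{0\}$.
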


(In the sketch in the preceding section, we defined
nilpotence in terms of condition~(ii) of Lemma~\ref{L.nilp},
as is often done.
The verification that this is equivalent to~(iii) required
the ``$2^{n-2}+1$'' part of the proof of that
lemma, which is why we referred to it as not quite trivial.)
\vspace{.5em}

We end this section with some observations on nilpotence
that will not be needed for our main results.

In the inequality $N_1\leq N_2$ of~(\ref{d.N1N2N3}),
we have equality whenever $A$ is associative
by the sentence following~(\ref{d.[]<()}).
For examples where the upper bound $N_2\leq 2^{N_1-2}+1$ is
achieved, take any positive integer $n,$
and consider the (nonassociative) $\!k\!$-algebra $A$ such that
\begin{equation}\begin{minipage}[c]{35pc}\label{d.xixi}
$A$ is free as a $\!k\!$-module on a basis $\{x_1,\dots,x_{n-1}\},$
with multiplication given by $x_m\,x_m\ =\ x_{m+1}$ for
$1\leq m\leq n-2,$ and all other products of basis
elements equal to zero (including $x_{n-1}\,x_{n-1}).$
\end{minipage}\end{equation}

It is easy to verify by induction that for every $i\leq n,$
$A_{[i]}$ is the submodule spanned by $\{x_i,\dots,x_{n-1}\}.$
In particular, $A_{[i]}$ becomes $\{0\}$ starting with $i=n,$
so the $N_1$ of Lemma~\ref{L.nilp} is $n$ for this algebra.
Less obvious, but no harder to verify, is the statement that
\begin{equation}\begin{minipage}[c]{35pc}\label{d.A()=}
for every $i\leq n,$ and $j$ with $2^{i-2}<j\leq 2^{i-1},$ $A_{(j)}$
is the submodule spanned by $\{x_i,\dots,x_{n-1}\}.$
\end{minipage}\end{equation}
Indeed, note that if $i>1,$ and $j$ lies in
the above range, then $j$ can be written as the sum of two
integers $\leq 2^{i-2},$ but not as the sum of two
integers $\leq 2^{i-3}.$
Using this fact, and the definitions~(\ref{d.strong_series})
and~(\ref{d.xixi}), one gets~(\ref{d.A()=}) by induction on $i.$
So for this algebra, $N_2=2^{n-2}+1=2^{N_1-2}+1.$

The next lemma shows that Lie algebras
behave like associative algebras in this respect.

\begin{lemma}\label{L.[]=()}
\textup{(i)}~ If $A$ is an associative or Lie algebra, then
for all positive integers $p$ and $q,$
$A_{[p]}A_{[q]}\subseteq A_{[p+q]}.$

\textup{(ii)}~ If $A$ is any algebra
for which the conclusion of\textup{~(i)} holds, then
for every positive integer $n,$ $A_{[n]}=\nolinebreak A_{(n)}.$
\end{lemma}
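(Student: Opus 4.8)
The plan is to handle the two parts separately, with~(i) splitting into an easy associative case and a more substantial Lie case, and~(ii) a clean induction using the conclusion of~(i) as a black box together with the already-established inclusion~(\ref{d.[]<()}). For~(i) in the associative case, I would invoke the observation following~(\ref{d.[]<()}) that $A_{[n]}$ equals $A^n$, the submodule spanned by all $\!n\!$-fold products; then $A^p A^q\subseteq A^{p+q}$ is immediate, since the product of a $\!p\!$-fold product with a $\!q\!$-fold product is, by associativity, a $\!(p+q)\!$-fold product.

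For~(i) in the Lie case, where the multiplication is the bracket and $A_{[n]}$ is the lower central series, the tools are anticommutativity, which gives $A_{[p]}A_{[q]}=A_{[q]}A_{[p]}$ as submodules and in particular $A_{[n]}A=A\,A_{[n]}$, so that $A_{[n+1]}=A_{[n]}A$, together with the Jacobi identity, which I would write in derivation form as $x(zy)=(xz)y+z(xy)$. I would prove $A_{[p]}A_{[q]}\subseteq A_{[p+q]}$ by induction on $q$, with $p$ ranging over \emph{all} positive integers in the inductive hypothesis. The base case $q=1$ is the inclusion $A_{[p]}A\subseteq A_{[p+1]}$, immediate from~(\ref{d.weak_series}). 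For the inductive step I would take a spanning element $x(zy)$ of $A_{[p]}A_{[q+1]}$, with $x\in A_{[p]}$, $z\in A_{[q]}$, $y\in A$, expand it by Jacobi as $(xz)y+z(xy)$, and treat each term: since $xz\in A_{[p]}A_{[q]}\subseteq A_{[p+q]}$ by the inductive hypothesis, the first term satisfies $(xz)y\in A_{[p+q]}A\subseteq A_{[p+q+1]}$; and since $xy\in A_{[p]}A\subseteq A_{[p+1]}$, the second satisfies $z(xy)\in A_{[q]}A_{[p+1]}=A_{[p+1]}A_{[q]}\subseteq A_{[p+1+q]}$, the last inclusion being the inductive hypothesis applied at the shifted first index $p+1$ (after anticommutativity is used to reorder the two factors).

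For part~(ii), the inclusion $A_{[n]}\subseteq A_{(n)}$ is already~(\ref{d.[]<()}), so only $A_{(n)}\subseteq A_{[n]}$ remains, which I would prove by induction on $n$. The base case is $A_{(1)}=A=A_{[1]}$. For the inductive step, each summand $A_{(m)}A_{(n+1-m)}$ of $A_{(n+1)}$ appearing in~(\ref{d.strong_series}) has both indices $m$ and $n+1-m$ in the range $\{1,\dots,n\}$, so the inductive hypothesis gives $A_{(m)}A_{(n+1-m)}\subseteq A_{[m]}A_{[n+1-m]}$, and the hypothesized conclusion of~(i) gives $A_{[m]}A_{[n+1-m]}\subseteq A_{[n+1]}$; summing over $m$ yields $A_{(n+1)}\subseteq A_{[n+1]}$.

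The main obstacle is the bookkeeping in the Lie case of~(i): the two terms produced by the Jacobi identity must both be shown to lie in $A_{[p+q+1]}$, and the second term forces one to apply the inductive hypothesis at the shifted index $p+1$, after using anticommutativity to reorder its factors. This is precisely why the induction must be organized as an induction on $q$ that is uniform in $p$. Once it is set up this way it closes immediately, and part~(ii) is then routine.
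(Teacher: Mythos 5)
Your proposal is correct and takes essentially the same approach as the paper: the associative case via the identity $A^pA^q = A^{p+q}$, the Lie case via anticommutativity plus the Jacobi identity in derivation form with an induction on one index that is uniform in the other, and part~(ii) by the identical induction combining the inductive hypothesis with the conclusion of~(i). The only cosmetic difference is that the paper inducts on the first index $p$ (expanding $A_{[p]}=[A,A_{[p-1]}]$ and using the hypothesis at $(p-1,q)$ and $(p-1,q+1)$), while you induct on the second index $q$ (using the hypothesis at $(p,q)$ and $(p+1,q)$) --- mirror-image arguments under anticommutativity.
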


\begin{proof}
For associative algebras,~(i) is a weakened version of the familiar
identity $A^p A^q = A^{p+q}.$

For Lie algebras, let us switch to bracket notation, and note
that by anticommutativity, the recursive step of
definition~(\ref{d.weak_series})
can be written $A_{[n+1]}=[A,\,A_{[n]}].$
This immediately gives~(i) for $p=1$ and arbitrary $q.$
So let $p>1,$ and assume inductively that the result is
true for all smaller $p.$
Using the Jacobi identity at the second step below, and
that inductive assumption at the third and fourth steps, we compute
\begin{equation}\begin{minipage}[c]{35pc}\label{d.jacobi}
$
[A_{[p]},\,A_{[q]}]\ =
\ [\,[A,\,A_{[p-1]}],\,A_{[q]}]\ \subseteq
\ [A,\,[A_{[p-1]},\,A_{[q]}]\,]\ +
\ [A_{[p-1]},\,[A,\,A_{[q]}]\,]\\[.3em]
\strt\qquad\subseteq\ [A,\,A_{[p+q-1]}]\ +
\ [\,A_{[p-1]},\,A_{[q+1]}]\ \subseteq
\ A_{[p+q]}\ + \ A_{[p+q]}\ =
\ A_{[p+q]}.$
\end{minipage}\end{equation}

To get~(ii), recall from~(\ref{d.[]<()}) that
$A_{[n]}\subseteq A_{(n)}$ for arbitrary algebras, and note that
by definition we have equality when $n=1.$
Thus, it suffices to prove the inclusion
$A_{[n]}\supseteq A_{(n)}$ when $n>1,$
inductively assuming this inclusion for smaller $n.$
The inclusion we are to
prove is clearly equivalent to the statement that each
summand $A_{(m)}A_{(n-m)}$ in the definition of $A_{(n)}$
is contained in $A_{[n]}.$
By our inductive
hypothesis, $A_{(m)}A_{(n-m)}$ is contained in $A_{[m]}A_{[n-m]},$
and by the assumed condition from~(i), this is
indeed contained in~$A_{[n]}.$
\end{proof}

\section{Properties of $M(A).$}\label{S.MA}

As noted in~\S\ref{S.defs},
one defines the {\em multiplication algebra} $M(A)$ of any
algebra $A$ to be the (generally nonunital) subalgebra
of the associative algebra $\En(A)$ generated by the
left and right multiplication operators $l_x$ and $r_x,$
as $x$ ranges over $A.$

For a general homomorphism of algebras $h:A\to B,$
there is no natural way to map $M(A)$ to $M(B).$
For instance, if $h$ is the inclusion of a subalgebra $A$ in
an algebra $B,$ such that a central element $x\in A$
becomes noncentral in $B,$ then $l_x=r_x$ in $M(A),$ but the
corresponding members of $M(B)$ are distinct.
For surjective homomorphisms, however, this problem goes away:

\begin{lemma}\label{L.surj}
If $h:A\to B$ is a surjective homomorphism of algebras, then
there exists a unique homomorphism $M(h):M(A)\to M(B)$ such that
\begin{equation}\begin{minipage}[c]{35pc}\label{d.Mfx}
for all $x\in A,\quad M(h)(l_x)\ =\ l_{h(x)}$\quad
and \quad $M(h)(r_x)\ =\ r_{h(x)},$
\end{minipage}\end{equation}
equivalently, such that
\begin{equation}\begin{minipage}[c]{35pc}\label{d.Mfu}
for all $u\in M(A)$ and $a\in A,$\quad $(M(h)(u))(h(a))\ =\ h(u(a)).$
\end{minipage}\end{equation}

Moreover, $M(h)$ is surjective.
\end{lemma}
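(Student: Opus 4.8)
The plan is to build $M(h)$ through the action formula~(\ref{d.Mfu}) rather than by declaring its values on generators via~(\ref{d.Mfx}): the algebra $M(A)$ comes with no presentation against which one could check that an assignment on generators respects all relations, whereas~(\ref{d.Mfu}) describes the prospective map intrinsically. Uniqueness is then immediate, and in fact holds under either characterization: $M(A)$ is generated as a $k$-algebra by the operators $l_x$ and $r_x$, so a homomorphism satisfying~(\ref{d.Mfx}) is pinned down on a generating set, hence everywhere.

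For existence, I want to send each $u\in M(A)$ to the operator $M(h)(u)$ on $B$ determined by $(M(h)(u))(h(a))=h(u(a))$. For this prescription to define a function on all of $B$ I need every element of $B$ to have the form $h(a)$, which is precisely the surjectivity of $h$; and for it to be single-valued I need $h(u(a))$ to depend only on $h(a)$, i.e.\ the implication $a-a'\in\ker h\implies u(a)-u(a')\in\ker h$, equivalently $u(\ker h)\subseteq\ker h$. This kernel-preservation is the crux of the proof and the step I expect to require the most care. It holds because $\ker h$ is a two-sided ideal of $A$ (as $h$ is an algebra homomorphism), so the generators carry it into itself, $l_x(\ker h)=x\,\ker h\subseteq\ker h$ and $r_x(\ker h)=(\ker h)\,x\subseteq\ker h$; and the operators in $\En(A)$ that preserve $\ker h$ form a subalgebra, which must therefore contain all of $M(A)$.

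Granting this, $M(h)(u)$ is a well-defined map $B\to B$, and it is $k$-linear: representing elements of $B$ as $h(a)$ and pushing additivity and scalar multiplication through the $k$-linear maps $h$ and $u$ gives linearity. I would then check that $u\mapsto M(h)(u)$ is a homomorphism into $\En(B)$. Additivity reads off the defining formula at once. For multiplicativity, apply that formula first to $u_2$ (with argument $a$) and then to $u_1$ (with argument $u_2(a)$):
\[
(M(h)(u_1u_2))(h(a))=h(u_1(u_2(a)))=(M(h)(u_1))(h(u_2(a)))=(M(h)(u_1)\,M(h)(u_2))(h(a)).
\]
Since $h$ is surjective, this pointwise identity upgrades to $M(h)(u_1u_2)=M(h)(u_1)\,M(h)(u_2)$.

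Finally I would read off~(\ref{d.Mfx}), settle the equivalence, and establish surjectivity. Applying the defining formula to $u=l_x$ gives $(M(h)(l_x))(h(a))=h(xa)=h(x)\,h(a)=l_{h(x)}(h(a))$ for all $a$, whence $M(h)(l_x)=l_{h(x)}$, and likewise $M(h)(r_x)=r_{h(x)}$; this is~(\ref{d.Mfx}). The two characterizations coincide because the map just constructed satisfies both, while by the uniqueness remark any homomorphism obeying~(\ref{d.Mfx}) must equal it and hence obey~(\ref{d.Mfu}). For surjectivity, surjectivity of $h$ gives $\{h(x):x\in A\}=B$, so as $x$ ranges over $A$ the images $l_{h(x)},r_{h(x)}$ of the generators of $M(A)$ range over all generators $l_y,r_y$ $(y\in B)$ of $M(B)$. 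Thus $M(h)$ indeed takes values in $M(B)$, and its image is a subalgebra of $M(B)$ containing every one of these generators, hence is all of $M(B)$.
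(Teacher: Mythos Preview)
Your proof is correct and follows essentially the same approach as the paper's: both establish that $\ker h$, being an ideal, is preserved by every element of $M(A)$, use this to define $M(h)(u)$ via the formula~(\ref{d.Mfu}), and deduce surjectivity from the fact that the generators $l_x,r_x$ of $M(A)$ map onto the generators of $M(B)$. Your treatment is more explicit in checking the homomorphism properties, the uniqueness, and the equivalence of~(\ref{d.Mfx}) and~(\ref{d.Mfu}), but the underlying argument is the same.
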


\begin{proof}
$\mathrm{Ker}(h)$ is an ideal of $A,$ hence it is
carried into itself by every map $l_x$ and every map $r_x,$ and thus
by every element $u$ of the algebra $M(A)$ generated by such maps.
Hence if two elements $a,\ a'\in A$ differ by an element
of $\mathrm{Ker}(h),$ so do $u(a)$ and $u(a');$ that is,
if $h(a)=h(a'),$ then $h(u(a))=h(u(a'));$ so as $B=h(A),$ we get
a well-defined linear map $M(h)(u):B\to B$ satisfying~(\ref{d.Mfu}).

It is immediate that $M(h)$ is an algebra homomorphism, and
acts by~(\ref{d.Mfx}) on elements $l_x$ and $r_x.$
It is surjective because it carries the generating
set $\{l_x,\,r_x\mid x\in A\}$ of $M(A)$ to the corresponding
generating set of $M(B).$
\end{proof}

It is also immediate that for a composable pair of surjective
algebra homomorphisms $h,\ g,$ we have $M(hg)=M(h)M(g);$
and that if we write $\mathrm{id}_A$ for
the identity homomorphism $A\to A,$ then
$M(\mathrm{id}_A)=\mathrm{id}_{M(A)}.$
Thus, $M$ is a functor from the category
whose objects are $\!k\!$-algebras and whose morphisms
are {\em surjective} algebra homomorphisms to the category of
associative $\!k\!$-algebras.

Now suppose we are given an inverse system of $\!k\!$-algebras; i.e.,
that for some inversely directed partially ordered set $I,$ we are
given a family of algebras $(A_i)_{i\in I}$ and algebra
homomorphisms $f_{ji}: A_i\to A_j$ $(i\leq j),$ such that
$f_{ii}=\mathrm{id}_{A_i}$ for $i\in I,$ and
$f_{kj}f_{ji}=f_{ki}$ for $i\leq j\leq k.$
Recall that the {\em inverse limit} of this system can be
constructed (or alternatively, the reader may consider it to be defined)
as the subalgebra $A=\lm_I A_i\subseteq \prod_I A_i$ consisting
of those elements $(a_i)_{i\in I}$ such that
$f_{ji}(a_i)=a_j$ for all $i\leq j.$
Thus, the projection maps $p_j: A\to A_j$ carrying
$(a_i)_{i\in I}$ to $a_j\in A_j$ satisfy
\begin{equation}\begin{minipage}[c]{35pc}\label{d.fjipi}
$f_{ji}\,p_i\ =\ p_j$ $(i\leq j).$
\end{minipage}\end{equation}
The algebra $A,$ with these maps, is universal for~(\ref{d.fjipi})
(see~\cite[\S\S7.4-7.5]{245} for motivation and details).

For a general inverse system of algebras $A_i,$ we cannot talk
of applying $M$ to the $f_{ji}$ and $p_i,$
since these may not be surjections.
(Even if all the $f_{ji}$ are surjective, the resulting
$p_i$ may fail to be \cite{LH} \cite{GH+AHS}~\cite{WW}.)
However, given any inverse system of
algebras $(A_i)_{i\in I},$ and writing $A$ for
its inverse limit, if we
replace each $A_i$ with its subalgebra $p_i(A),$ the result
will be an inverse system still having inverse limit $A,$ but where
the restricted maps $f_{ji}$ and $p_i$ are all surjective.
(Actually, surjectivity of the $p_i$ implies
surjectivity of the $f_{ji},$ in view of~(\ref{d.fjipi}).)
Also, of course, if the original algebras $A_i$ were nilpotent,
the subalgebras with which we have replaced them will still be.
Hence in what follows, we shall often restrict attention
to inverse systems of algebras in which all these maps are surjective.

\begin{lemma}\label{L.M+lm}
Let $(A_i, f_{ji})_{i,j\in I}$ be an inverse system of $\!k\!$-algebras,
and $A=\lm A_i$ its inverse limit, with projection maps $p_i:A\to A_i;$
and suppose the $p_i$ \textup{(}and hence the $f_{ji})$ are all
surjective.

Then $\lm_I M(A_i)$ may be identified with a subalgebra of $\En(A)$
containing $M(A),$ by letting each $(u_i)_{i\in I}\in\lm_I M(A_i)$
act on $A$ by sending $(a_i)_{i\in I}\in A$ to
$(u_i(a_i))_{i\in I}\in A.$
\end{lemma}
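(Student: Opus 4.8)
The plan is to verify three things in turn: (1) that the prescribed rule actually defines a well-formed action of $\lm_I M(A_i)$ on $A$ by $k$-linear endomorphisms, giving a map into $\En(A)$; (2) that this map is an injective algebra homomorphism, so its image is a genuine subalgebra of $\En(A)$ that we may identify with $\lm_I M(A_i)$; and (3) that $M(A)$ sits inside this image. Throughout I would write elements of $A$ as threads $(a_i)_{i\in I}$ with $f_{ji}(a_i)=a_j$, and elements of $\lm_I M(A_i)$ as threads $(u_i)_{i\in I}$ with $M(f_{ji})(u_i)=u_j$, where $M(f_{ji})$ is the functorial map supplied by Lemma~\ref{L.surj} (legitimate precisely because we have arranged all $f_{ji}$ to be surjective).

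For (1), the one substantive point is that $(u_i(a_i))_{i\in I}$ really lies in the subalgebra $A\subseteq\prod_I A_i$, i.e.\ that it is compatible with the connecting maps. This is exactly where the compatibility conditions on the two threads combine: for $i\le j$ I would compute
\begin{equation}\label{d.compat}
f_{ji}(u_i(a_i))\ =\ (M(f_{ji})(u_i))(f_{ji}(a_i))\ =\ u_j(a_j),
\end{equation}
the first equality being the defining property~(\ref{d.Mfu}) of $M(f_{ji})$ applied to $u_i$ and $a_i$, and the second using both threading conditions. Linearity in $(a_i)_{i\in I}$ is componentwise and immediate, so we obtain a well-defined element of $\En(A)$; that the assignment $(u_i)\mapsto(\text{this endomorphism})$ respects sums and composition is again a componentwise check. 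So (1) and the homomorphism half of (2) are routine.

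The only genuinely delicate step is \emph{injectivity} in (2): I must show that if a thread $(u_i)$ acts as the zero endomorphism of $A$, then every $u_i=0$. The subtlety is that knowing $u_i(a_i)=0$ for all $i$ only for those $a_i$ arising as the $i$th component of a \emph{global} thread $(a_j)_{j\in I}\in A$ is weaker than knowing $u_i$ annihilates all of $A_i$ — unless each $p_i\colon A\to A_i$ is surjective. This is precisely why the surjectivity hypothesis on the $p_i$ is built into the lemma. Concretely, given any $b\in A_i$, surjectivity of $p_i$ lets me choose a thread $a\in A$ with $p_i(a)=a_i=b$; vanishing of the action on $a$ forces $u_i(b)=u_i(a_i)=0$; as $b$ was arbitrary, $u_i=0$. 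Hence the map is injective and its image is a subalgebra isomorphic to $\lm_I M(A_i)$.

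Finally, for (3) I would exhibit each generator of $M(A)$ as such a thread. Given $x=(x_i)_{i\in I}\in A$, the left multiplication $l_x\in M(A)$ acts by $l_x(a)=(x_i a_i)_{i\in I}=(l_{x_i}(a_i))_{i\in I}$, so $l_x$ is the action of the thread $(l_{x_i})_{i\in I}$; the threading condition $M(f_{ji})(l_{x_i})=l_{f_{ji}(x_i)}=l_{x_j}$ holds by~(\ref{d.Mfx}) together with $f_{ji}(x_i)=x_j$, so this thread lies in $\lm_I M(A_i)$. The same holds for each $r_x$, and since these generate $M(A)$ as an algebra and the identification is an algebra homomorphism, all of $M(A)$ lands in the image. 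I expect step~(2)'s injectivity — and the explicit use of surjectivity of the $p_i$ there — to be the crux; everything else is bookkeeping with the two compatibility conditions and~(\ref{d.Mfu}).
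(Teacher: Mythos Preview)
Your proof is correct and follows essentially the same approach as the paper's: both verify the compatibility~(\ref{d.compat}) via~(\ref{d.Mfu}), then check that the resulting map $\lm_I M(A_i)\to\En(A)$ is a faithful algebra homomorphism, and finally identify $l_x$ with the thread $(l_{x_i})$. The paper leaves faithfulness to a one-line ``routine to verify'' remark, whereas you spell out explicitly (and correctly) that this is where surjectivity of the $p_i$ is used; otherwise the arguments coincide.
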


\begin{proof}
The condition for $(a_i)_{i\in I}$ to belong to $A=\lm_I A_i$ says that
each $f_{ji}$ takes $a_i$ to $a_j,$ and the condition for
$(u_i)_{i\in I}$ to belong to $\lm_I M(A_i)$ says that
each $M(f_{ji})$ takes $u_i$ to $u_j.$
By~(\ref{d.Mfu}), with $f_{ji}$ for $h,$ the latter condition
tells us that $u_j(f_{ji}(a_i))=f_{ji}(u_i(a_i)),$ and by the former,
the left-hand side of this relation equals $u_j(a_j).$
This shows that the $\!I\!$-tuple $(u_i(a_i))_{i\in I}$
again belongs to $A=\lm_I A_i.$
Thus, each $u\in\lm_I M(A_i)$ induces a map $A\to A$
acting as described in the last phrase of the lemma.

It is routine to verify that these maps are module endomorphisms,
that this action of $\lm_I M(A_i)$ on $A$ respects the ring operations
of $\lm_I M(A_i),$ and that it is faithful; so we get an
identification of $\lm_I M(A_i)$ with a subalgebra of $\En(A).$
Finally, for any $x=(x_i)_{i\in I}\in A,$ one verifies
that $(l_{x_i})_{i\in I}$
is an element of $\lm_I M(A_i)$ that acts on $A$ as $l_x;$
so as a subalgebra of $\En(A),$ $\lm_I M(A_i)$ contains each
operator $l_x.$
It similarly contains each $r_x,$ hence it contains $M(A).$

In fact, one easily verifies that each $u\in M(A)$ agrees
with the element $(M(p_i)(u))_{i\in I}\in\lm_I M(A_i).$
\end{proof}

In general, $\lm_I M(A_i)$ will be properly larger than $M(A).$
Indeed, as noted in~\S\ref{S.preview}, if all $A_i$ are nilpotent,
then the algebras $M(A_i)$ are nilpotent, hence are radical, hence
$\lm_I M(A_i)$ is radical.
But in the example we sketched there (to be given in
detail in~\S\ref{S.eg}), $M(A)$ was
not radical (since the image of $1-\,l_x\,r_z$ under the
map $M(A)\to M(B)$ was not invertible, so
that element could not have been invertible in $M(A)).$
Thus, in such an example, $M(A)$ cannot coincide with $\lm_I M(A_i),$
and, indeed, must fail to be closed therein under quasiinverses.

\section{Hopfian modules, and modules of finite length.}\label{S.Hopf+}

As also noted in~\S\ref{S.preview}, the operator $1-l_x\,r_z$
of the example referred to above will nevertheless
be {\em surjective} on any homomorphic image $B$ of $A.$
A key to the proof of our main result will be to restrict
attention to algebras $B$ whose $\!k\!$-module structure
is such that every surjective module endomorphism is invertible.
In getting our main conclusion, we will have to make the
stronger assumption that $B$ has finite length as a $\!k\!$-module;
but let us take a look at the weaker condition just stated,
under which we will be able to carry our proof part of the way.

An algebraic structure is said to be {\em Hopfian} if it has no
surjective endomorphisms other than automorphisms \cite{VAH} \cite{KV}.
Here are some quick examples of Hopfian modules:
A vector space is Hopfian if and only if it is finite-dimensional.
A Noetherian module $M$ over any ring is Hopfian;
for if $h:M\to M$ were surjective but not injective, then the chain
\begin{equation}\begin{minipage}[c]{35pc}\label{d.strict_chain}
$\{0\}\ \subsetneq\ h^{-1}(\{0\})\ \subsetneq
\ h^{-1}(h^{-1}(\{0\}))\ \subsetneq\ \dots$
\end{minipage}\end{equation}
would contradict the Noetherian condition \cite[Prop.~IV.5.3(i)]{HB}
\cite[Prop.~6(i)]{VAH} \cite[Prop.~1.14]{TYL}.
In particular, any module of finite length is Hopfian.
Over a commutative ring, every finitely generated module
is Hopfian \cite[Prop.~IV.5.3(ii)]{HB}, and over
a commutative integral domain $k$ with field of fractions $F,$
any $\!k\!$-submodule of a finite-dimensional
$\!F\!$-vector-space is Hopfian (cf.~\cite[Prop.~11]{VAH}).
So, for instance, $\mathbb{Q}$ is a Hopfian $\!\mathbb{Z}\!$-module --
though its homomorphic image $\mathbb{Q/Z}$ is an example
of a non-Hopfian module.
(The classes of Hopfian modules listed above are all closed under
finite direct sums; however, examples are known of non-Hopfian finite
direct sums of Hopfian modules; indeed, of a Hopfian abelian group
$A$ such that $A\oplus A$ is not Hopfian~\cite{ALSC}.)

The next result only considers module-structures on $A$ and $B,$
and does not require the base-ring to be commutative.
In view of our convention that $k$ denotes a commutative
ring, we shall call the base-ring $K.$
(In our application of the result, however, $K$
will be our commutative ring $k.)$

\begin{proposition}\label{P.Hopfian}
Suppose $A$ and $B$ are right modules over an associative
unital ring $K,$ let $h:A\to B$ be a surjective module
homomorphism, and let $\En(A;\ker(h))$ be the
subring of the endomorphism ring $\En(A)$ consisting
of the endomorphisms that carry $\ker(h)$ into itself
\textup{(}and hence induce endomorphisms of~$B).$

Suppose $R$ is a radical subring
of $\En(A),$ and $B$ is Hopfian as a $\!K\!$-module.
Then $R\cap\nolinebreak\En(A;\ker(h))$ is also a radical ring; hence its
image in $\En(B)$ is a radical subring of $\En(B).$
\end{proposition}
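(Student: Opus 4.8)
The plan is to reduce the statement ``$S := R\cap\En(A;\ker(h))$ is radical'' to the criterion, recorded in \S\ref{S.defs}, that an associative ring is radical precisely when every one of its elements is quasiinvertible \emph{within that ring}. So I would fix $u\in S$ and seek a quasiinverse of $u$ lying in $S$. Since $S\subseteq R$ and $R$ is radical by hypothesis, $u$ already has a quasiinverse $v\in R$; that is, $u*v=v*u=0$, equivalently $(1+u)(1+v)=(1+v)(1+u)=1$ in the unital ring $\En(A)$, so that $1+u$ is invertible there with $(1+u)^{-1}=1+v$. Because $S$ is a subring of $R$, it will suffice to show that $v\in\En(A;\ker(h))$, i.e.\ that $v$ --- equivalently $(1+u)^{-1}$ --- carries $\ker(h)$ into itself; then $v\in S$ and $u$ is quasiinvertible in $S$.

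The crux, and the place where the Hopfian hypothesis must enter, is that $(1+u)^{-1}$ preserving $\ker(h)$ does \emph{not} follow merely from $u$ (hence $1+u$) preserving it: an injective endomorphism can carry a submodule properly into itself, and then its inverse fails to preserve that submodule. My strategy is to use Hopfianness to upgrade ``$1+u$ maps $\ker(h)$ into $\ker(h)$'' to ``$1+u$ maps $\ker(h)$ onto $\ker(h)$''. Since $u\in\En(A;\ker(h))$, the map $1+u$ descends to an endomorphism of $B=A/\ker(h)$, namely $1+\bar u$, where $\bar u$ is the endomorphism of $B$ induced by $u$. As $1+u$ is invertible, hence surjective, on $A$, the induced map $1+\bar u$ is surjective on $B$ (lift, hit, project); Hopfianness of $B$ then forces $1+\bar u$ to be an automorphism of $B$, in particular injective.

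With injectivity of $1+\bar u$ in hand I would finish as follows. Given $n\in\ker(h)$, surjectivity of $1+u$ on $A$ lets me write $n=(1+u)(a)$ for some $a\in A$; applying $h$ gives $0=h(n)=(1+\bar u)(h(a))$, so injectivity of $1+\bar u$ yields $h(a)=0$, i.e.\ $a\in\ker(h)$. Thus $1+u$ maps $\ker(h)$ \emph{onto} $\ker(h)$; being also injective (as $1+u$ is invertible on $A$), it restricts to a bijection of $\ker(h)$ onto itself, and therefore $(1+u)^{-1}=1+v$ carries $\ker(h)$ into itself. Hence $v\in\En(A;\ker(h))$, so $v\in S$ and every element of $S$ is quasiinvertible in $S$; that makes $S$ radical.

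For the final clause, the assignment $u\mapsto\bar u$ is the restriction to $S$ of the natural ring homomorphism $\En(A;\ker(h))\to\En(B)$, so the image of $S$ in $\En(B)$ is a homomorphic image of the radical ring $S$; since homomorphic images of radical rings are radical (as used in \S\ref{S.preview}), that image is a radical subring of $\En(B)$. I expect the one genuinely delicate point to be the passage from ``into'' to ``onto'' via the induced map on $B$; everything else is bookkeeping with the identity $(1+u)(1+v)=1$ and the definition of $\En(A;\ker(h))$.
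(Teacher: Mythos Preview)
Your proof is correct and follows essentially the same route as the paper's: take $u\in S=R\cap\En(A;\ker(h))$, use radicality of $R$ to get a quasiinverse $v\in R$, then use surjectivity of $1+u$ on $A$ and Hopfianness of $B$ to conclude that the induced map $1+\bar u$ is injective on $B$, which is exactly the statement that $(1+u)^{-1}=1+v$ preserves $\ker(h)$. The paper phrases the last step as ``$1+u$ carries no element from outside $\ker(h)$ into $\ker(h)$,'' while you phrase it as ``$1+u$ maps $\ker(h)$ onto $\ker(h)$''; these are equivalent reformulations of the same fact, and the argument is otherwise identical.
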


\begin{proof}
To show that the ring $R\cap\En(A;\ker(h))$ is radical, it suffices
to verify that it is closed under quasi\-inverses in $R.$
Let $r\in R\cap\En(A;\ker(h)),$ and $s$ be its quasiinverse in $R.$
Thus, $1+r$ and $1+s$ are mutually inverse elements of $\En(A).$

Since $1+r$ is invertible as an endomorphism of $A,$ it is
in particular surjective, from which it is easy to see that the
endomorphism of $B$ it induces is surjective.
Since $B$ is Hopfian, that endomorphism is also injective, and
this says that back in $\En(A),$
$1+r$ carries no element from outside $\ker(h)$ into $\ker(h).$
Thus, the inverse map $1+s\in\En(A)$ carries no element of $\ker(h)$
out of $\ker(h),$ i.e., $1+s\in\En(A;\ker(h));$
hence $s\in\En(A;\ker(h)),$ as required.
\end{proof}

We shall use the above result in conjunction with
part~(iii) of the next lemma.
Note that in that lemma, we return to the general hypothesis
of a commutative base-field $k;$ and that
parts~(i) and~(ii), but not part~(iii), assume $B$ an algebra.
(Even in part~(iii), it will be an algebra in our application.)

\begin{lemma}\label{L.idpt}
\textup{(i)}~ In a radical associative algebra $B,$ a finite set of
elements $X\subseteq B$
which are not all zero cannot satisfy $X\subseteq BX.$

\textup{(ii)}~ A radical associative algebra $B$ cannot contain
a nonzero finitely generated idempotent subalgebra $S=\nolinebreak S^2.$

\textup{(iii)}~ If $B$ is a $\!k\!$-module of finite length, then
any radical subalgebra $R\subseteq\En(B)$ is nilpotent.
\end{lemma}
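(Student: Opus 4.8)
The plan is to exploit the finite length of $B$ through the descending chain condition on its $\!k\!$-submodules. Consider the chain $B\supseteq RB\supseteq R^2B\supseteq\cdots,$ where $R^nB$ denotes the $\!k\!$-submodule of $B$ spanned by all $u(b)$ with $u$ a product of $n$ elements of $R$ and $b\in B.$ Each $R^nB$ is a genuine $\!k\!$-submodule, since the maps in $R$ are $\!k\!$-linear, and $R^{n+1}B\subseteq R^nB$ because any product of $n+1$ elements of $R$ may be regrouped (via associativity) as a product of $n$ of them. As $B$ has finite length, this chain stabilizes: $R^nB=R^{n+1}B=:V$ for some $n,$ whence $RV=V.$ If I can show $V=\{0\},$ then $R^nB=\{0\},$ which says every element of $R^n$ is the zero operator on $B;$ thus $R^n=\{0\}$ and $R$ is nilpotent. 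So the whole problem reduces to proving that $RV=V$ forces $V=\{0\}.$

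For this I would use that $V,$ being of finite length, is finitely generated as a $\!k\!$-module, say $V=\sum_{i=1}^m k\,x_i.$ From $RV=V$ each generator can be written $x_i=\sum_l s_{il}(x_l)$ with $s_{il}\in R,$ the $\!k\!$-coefficients being absorbed into the operators by $\!k\!$-linearity. Assembling the $s_{il}$ into a matrix $S\in M_m(R)$ and the $x_i$ into a column $\mathbf{x},$ these relations read $(I-S)\mathbf{x}=0.$ Since $R$ is radical, so is $M_m(R),$ so $-S$ has a quasiinverse $T\in M_m(R);$ thus $I-S$ is invertible in $M_m(\En(B)),$ with inverse $I+T.$ Applying $I+T$ to $(I-S)\mathbf{x}=0$ gives $\mathbf{x}=0,$ i.e.\ all $x_i=0$ and $V=\{0\}.$ This is exactly the mechanism already behind part~(i); equivalently, one may observe that $R$ is a two-sided ideal of the unital ring $R+k\cdot\mathrm{id}_B$ consisting of quasiinvertible elements, hence lies in its Jacobson radical, after which $RV=V$ with $V$ finitely generated forces $V=\{0\}$ by Nakayama's lemma.

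The one delicate point is precisely this last implication, $RV=V\Rightarrow V=\{0\}$ in the non-unital radical setting; everything else is bookkeeping. I would take care that the chain consists of honest $\!k\!$-submodules, so that finite length supplies the stabilization, and that the equivalence $R^nB=\{0\}\Leftrightarrow R^n=\{0\}$ is used to pass from the module to the operator algebra. I would then lean on either the radicality of the matrix ring $M_m(R)$ or, equivalently, on Nakayama applied to the Jacobson radical of $R+k\cdot\mathrm{id}_B,$ mirroring the argument for~(i); the finite-length hypothesis enters twice, once to stabilize the chain and once to guarantee that $V$ is finitely generated.
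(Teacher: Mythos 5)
The genuine gap is one of coverage: the lemma has three parts, and your proposal proves only part~(iii). Part~(i) -- that in a radical associative algebra $B$ a finite, not-all-zero set $X$ cannot satisfy $X\subseteq BX$ -- is never argued; you allude to it (``exactly the mechanism already behind part~(i)''), but in a blind attempt it must be supplied. It does follow from your matrix mechanism: writing $X=\{x_1,\dots,x_m\}$ and $x_i=\sum_l b_{il}x_l$ with $b_{il}\in B,$ form $S=(b_{il})\in\mathrm{Mat}_m(B),$ use radicality of $\mathrm{Mat}_m(B)$ to invert $I-S,$ and conclude all $x_i=0$; alternatively (the paper's route) apply Nakayama's lemma to the left ideal $U=(k+B)X$ of the unital ring $k+B,$ noting that $X\subseteq BX$ gives $BU=U.$ Part~(ii) is absent entirely, and it is not a purely formal consequence of the mechanism: one needs the reduction that if $S=S^2$ is generated as a $\!k\!$-algebra by the finite set $X,$ then $S\subseteq(k+S)X,$ whence $X\ \subseteq\ S\ =\ S^2\ \subseteq\ BS\ \subseteq\ B(k+S)X\ \subseteq\ BX,$ which places you exactly in the situation excluded by~(i). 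Without these two arguments the statement as given is unproven.

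For part~(iii) itself, your proof is correct and is essentially the paper's: finite length stabilizes the chain $B\supseteq RB\supseteq R^2B\supseteq\cdots,$ the stable term $V$ is finitely generated, $RV=V$ forces $V=\{0\}$ by a Nakayama-type step, and $R^nB=\{0\}$ gives $R^n=\{0\}$ because elements of $R^n$ are operators on $B.$ Your ``equivalently'' remark -- that $R$ is an ideal consisting of quasiinvertible elements in the unital ring $R+k\cdot\mathrm{id}_B,$ hence lies in its Jacobson radical, so Nakayama applies to the finitely generated module $V$ -- is in substance the paper's argument; the matrix-quasiinverse computation is a sound self-contained substitute, granted the standard fact that $\mathrm{Mat}_m(R)$ is radical when $R$ is (a fact you would also need for~(i), so it deserves a one-line justification or citation rather than a bare assertion).
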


\begin{proof}
(i): Writing $U=(k+B)X$ for the left ideal of $B$ generated by $X,$
the condition $X\subseteq BX$ implies that $BU=U;$
so by Nakayama's Lemma \cite[Lemma~4.22(2)]{TYL1}
\cite[Exercise XVII.7.4, p.661]{SL.Alg},
$U=\{0\},$ hence $X\subseteq\{0\},$ contradicting our hypothesis.
(The references cited state Nakayama's Lemma for unital rings.
In our present context, we can apply that version of the
lemma to the left module $U$ over the unital ring
$k+B,$ in which $B$ is an ideal contained in the radical.)

(ii): Suppose $S$ were an idempotent subalgebra of
$B$ generated as a $\!k\!$-algebra by a finite set $X.$
The fact that $S$ is generated by $X$ implies that
$S\subseteq(k+S)X;$ hence
\begin{equation}\begin{minipage}[c]{35pc}\label{d.X<...}
$X\ \subseteq\ S\ =\ S^2\ \subseteq B\,S\ \subseteq
\ B(k+S)X\ \subseteq\ B\,X,$
\end{minipage}\end{equation}
contradicting~(i).

(iii): Since $B$ has finite length as a $\!k\!$-module, the chain of
submodules $B\supseteq RB\supseteq R^2B\supseteq\dots$ stabilizes;
say $R^{n+1}B=R^nB.$
Again using finite length of $B$ we see that $R^nB$ is finitely
generated as a $\!k\!$-module, hence as an $\!R\!$-module;
hence, since it is carried onto itself
by the radical ring $R,$ Nakayama's lemma shows that it is zero.
Hence $R^n=\{0\}.$
\end{proof}

\section{The main theorem.}\label{S.main}

\begin{definition}\label{D.pro-np}
A $\!k\!$-algebra $A$ which can be written as
an inverse limit of nilpotent $\!k\!$-algebras will be called
{\em pro-nilpotent}.
\end{definition}

Part~(iii) of the next result is what we have been aiming at.
The first two parts note what can be said under weaker assumptions.

\begin{theorem}\label{T.main}
Let $B=h(A)$ be a surjective homomorphic image of a pro-nilpotent
$\!k\!$-algebra $A.$
Then
\vspace{.2em}

\textup{(i)}~ For every $r\in M(B),$ the operator $1+r\in\En(B)$
is surjective.
\textup{(}More generally, for every $n>0$ and
$r\in\mathrm{Mat}_n(M(B)),$ $1+r$ acts surjectively on the
direct sum of $n$ copies of $B.)$
\vspace{.2em}

\textup{(ii)}~ If $B$ is Hopfian as a $\!k\!$-module, $M(B)$ is
contained in a Jacobson radical subalgebra of $\En(B).$
\vspace{.2em}

\textup{(iii)}~ If $B$ is of finite length as a $\!k\!$-module,
then it is nilpotent as an algebra.
\end{theorem}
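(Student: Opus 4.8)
The plan is to prove parts (i)--(iii) in turn, each feeding the next, following the roadmap of \S\ref{S.preview}. First I would reduce to a convenient inverse system: writing $A=\lm_I A_i$ with the $A_i$ nilpotent, I replace each $A_i$ by its image $p_i(A),$ so that all projections $p_i:A\to A_i$ and connecting maps $f_{ji}$ become surjective while the $A_i$ stay nilpotent. By Corollary~\ref{C.nilpiff} each $M(A_i)$ is then nilpotent, hence Jacobson radical; and since radicality is inherited by inverse limits (\S\ref{S.preview}), the subalgebra $R=\lm_I M(A_i)\subseteq\En(A)$ furnished by Lemma~\ref{L.M+lm} is radical and contains $M(A).$ This $R$---rather than $M(A),$ which in general fails to be radical---is the object I would carry through the whole argument.

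For part~(i), given $r\in M(B)$ I would lift it along the surjection $M(h):M(A)\to M(B)$ of Lemma~\ref{L.surj} to some $u\in M(A)\subseteq R.$ Radicality of $R$ makes $u$ quasiinvertible there, so $1+u$ is invertible, in particular surjective, as an element of $\En(A).$ Given any $y\in B,$ I lift it to $a\in A,$ solve $(1+u)(w)=a$ in $A,$ and apply $h$: since $u\in M(A)$ preserves the ideal $\ker(h)$ and induces $r$ on $B$ via~(\ref{d.Mfu}), this yields $(1+r)(h(w))=y,$ so $1+r$ is surjective on $B.$ The matrix refinement is the same argument run inside $\mathrm{Mat}_n(R),$ which is again radical and acts on $A^{\oplus n}.$

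The key subtlety---the one \S\ref{S.preview} dwells on---is that this gives only surjectivity of $1+r,$ not invertibility: the quasiinverse $1+v$ of $1+u$ lives in the larger algebra $R=\lm_I M(A_i),$ so $v$ need not preserve $\ker(h)$ and hence need not descend to an inverse on $B.$ Converting surjectivity into something usable is exactly where the hypotheses on $B$ enter. For part~(ii), with $B$ Hopfian, I would apply Proposition~\ref{P.Hopfian} with $K=k,$ the surjection $h,$ and this same radical $R$: it yields that $R\cap\En(A;\ker(h))$ is radical with radical image in $\En(B).$ Since $\ker(h)$ is an ideal, $M(A)$ lies in $\En(A;\ker(h))$ as well as in $R,$ and its image in $\En(B)$ is exactly $M(B)$ by~(\ref{d.Mfu}); hence $M(B)$ is contained in a radical subalgebra of $\En(B).$

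Part~(iii) is then immediate. A module of finite length is Hopfian, so part~(ii) places $M(B)$ inside a radical subalgebra $R'\subseteq\En(B)$; since $B$ has finite length, Lemma~\ref{L.idpt}(iii) forces $R',$ hence its subalgebra $M(B),$ to be nilpotent; and Corollary~\ref{C.nilpiff} then gives that $B$ itself is nilpotent. I expect the main obstacle to be not any single computation but the bookkeeping of which algebra is radical and which operators preserve $\ker(h)$: the decisive move throughout is to work inside $R=\lm_I M(A_i)$ rather than inside $M(A),$ and to track carefully which elements descend to endomorphisms of $B.$
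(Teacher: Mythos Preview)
Your proposal is correct and follows essentially the same route as the paper's own proof: the same reduction to a surjective inverse system, the same use of the radical algebra $R=\lm_I M(A_i)\subseteq\En(A)$ from Lemma~\ref{L.M+lm}, the same appeal to Proposition~\ref{P.Hopfian} with this $R$ for part~(ii), and the same passage via Lemma~\ref{L.idpt}(iii) and Corollary~\ref{C.nilpiff} for part~(iii). Your emphasis on tracking which operators preserve $\ker(h)$ and which live only in $R$ is exactly the bookkeeping the paper's proof relies on.
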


\begin{proof}
Let $A=\lm_I A_i,$ where $(A_i, f_{ji})_{i,j\in I}$ is an inverse
system of nilpotent $\!k\!$-algebras.

As noted earlier, if we replace each $A_i$ by the image $p_i(A)$
therein, and restrict the $f_{ji}$ to these subalgebras,
we get a new inverse system having the same inverse limit $A,$
and such that the restricted maps $p_i$ and $f_{ji}$ are surjective;
moreover, the new $A_i,$ being subalgebras of the given algebras,
are still nilpotent.
Hence without loss of generality, let us assume all
the $p_i$ and $f_{ji}$ surjective.

By Corollary~\ref{C.nilpiff}, the multiplication algebras $M(A_i)$
are nilpotent, hence are radical, and an inverse
limit of radical rings is radical; so under the identification
of Lemma~\ref{L.M+lm}, $\lm_I M(A_i)$ is a radical subalgebra
of $\En(A)$ containing $M(A)\subseteq\En(A;\ker(h)).$

With no additional assumptions, we see that the radicality
of $\lm_I M(A_i)$ implies that for every $u\in M(A),$ the
operator $1+u$ is invertible on $A,$ hence in particular,
acts surjectively, hence that its image in $M(B)$ acts surjectively
on $B,$ giving the first statement of~(i).
This argument applies, more generally, to $\mathrm{Mat}_n(M(A))$ and
$\mathrm{Mat}_n(M(B)),$ acting on a direct sum of copies of
$A,$ respectively $B,$ yielding the parenthetical generalization.

If $B$ is Hopfian as a $\!k\!$-module, then
by Proposition~\ref{P.Hopfian},
$(\lm_I M(A_i))\cap\En(A;\ker(h))$ is a radical $\!k\!$-algebra.
Since $M(A)\subseteq (\lm_I M(A_i))\cap\En(A;\ker(h)),$
its image $M(B)=M(h)(M(A))\subseteq\En(B)$ is contained in the radical
subalgebra $M(h)((\lm_I M(A_i))\cap\En(A;\ker(h))),$ giving~(ii).

Finally, if $B$ has finite length, Lemma~\ref{L.idpt}(iii) shows
that the above radical subalgebra of $\En(B)$ is nilpotent, hence its
subalgebra $M(B)$ is nilpotent,
hence by Corollary~\ref{C.nilpiff} again, $B$ is nilpotent.
\end{proof}

In the next section we will give counterexamples to the
conclusions of Theorem~\ref{T.main} and Lemma~\ref{L.idpt} in
the absence of some of the hypotheses; in particular, the
finite length hypothesis of Theorem~\ref{T.main}(iii).
On the other hand, in~\S\ref{S.Nak-like} (after some general
observations in~\S\ref{S.chain}), we will get a few
additional positive results.
In~\S\ref{S.solv} we note a consequence of our main theorem
for solvable Lie algebras,
and in the final~\S\S\ref{S.variants}-\ref{S.questions},
some questions and topics for further study.

\section{Counterexamples.}\label{S.eg}

The first example below will be the promised case of a
homomorphic image $B$ of a pro-nilpotent algebra containing elements
$x,\,z$ such that the map $1-l_x r_z\in M(B)$ is not one-to-one.

In constructing that example and the next,
we shall make use of unital free associative
algebras $k\<\langle X\rangle$ in a finite set $X$ of
noncommuting indeterminates (e.g., $X=\{x,y,z\})$ over a field $k,$
their completions, which are noncommuting formal power series algebras
$k\<\langle\langle X\rangle\rangle,$
and the nonunital versions of these two
constructions (their ``augmentation ideals'', i.e., the kernels of the
unital homomorphisms to $k$ sending the indeterminates to zero),
which we will denote $[k\<]\langle X\rangle,$ respectively
$[k\<]\langle\langle X\rangle\rangle.$

Within these algebras, we shall write
$(a,b,\dots)$ for the $\!2\!$-sided ideal generated by
elements $a,b,\dots\,.$
In the completed algebras, we shall also
write $((a,b,\dots))$ for the {\em closure} of such an ideal in the
inverse limit topology.

These examples will start by taking
a set $T$ of monomials in the given free generators,
which does not contain the monomial $1,$ and
forming the factor algebra $k\<\langle X\rangle/(T).$
Note that this has a $\!k\!$-basis consisting of all monomials not
containing any subword belonging to $T.$
We shall then form the
completion $k\<\langle\langle X\rangle\rangle/((T))$
and take for our $A$ the subalgebra
$[k\<]\langle\langle X\rangle\rangle/((T)).$
It is not hard to see that $k\<\langle\langle X\rangle\rangle/((T))$
is the inverse limit of the factor-algebras
$k\<\langle X\rangle/(T\cup X^i)$
where $X^i$ denotes the set of monomials of length $i$ in the
given generators, so that
$[k\<]\langle\langle X\rangle\rangle/((T))$ is the inverse limit of
the nilpotent algebras $A_i = [k\<]\langle X\rangle/(T\cup X^i).$
This inverse limit consists of all formal infinite
$\!k\!$-linear combinations of monomials having no subword in $T.$

By abuse of notation, we shall use the same symbols
$x,\,\dots$ for our original generators and
for their images in our various factor-algebras.

\begin{example}\label{E.y-xyz}
There exists a pro-nilpotent associative algebra $A$ over a field $k$
having elements $x,y,z$ such that $y\notin (y-xyz).$

Thus, in the algebra $B=A/(y-xyz),$
the operator $1-l_x r_z$ annihilates the nonzero element $y.$
In particular, $0\neq y\in B\,y\,B,$ so $B$
cannot be residually nilpotent.

Hence also, though the algebras $A$ and $B$ are Jacobson radical,
$M(A)$ and $M(B)$ are not: in each,
the element $-l_x r_z$ is not quasiinvertible \textup{(}though
it is the product of the quasiinvertible
elements $-l_x$ and $r_z).$
\end{example}

\begin{proof}[Construction and proof]
Since it is easier to study the ideal
of an algebra $[k\<]\langle\langle X\rangle\rangle/((T))$
generated by one of the indeterminates than the ideal
generated by a more complicated expression, we shall take for $A$ an
algebra of the form $[k\<]\langle\langle x,w,z\rangle\rangle/((T)),$
find a $y\in A$ such that $w=y-xyz,$
and then obtain $B$ by dividing $A$
by the ideal generated by the indeterminate $w.$

Let the set of monomials $T$ be  chosen so that the only nonzero
monomials in $[k\<]\langle x,w,z\rangle/(T)$ are the words
\begin{equation}\begin{minipage}[c]{35pc}\label{d.xiwzj}
$x^iw\,z^j$ $(i,j\geq 0),$ and subwords of such words.
\end{minipage}\end{equation}
Thus, we take
\begin{equation}\begin{minipage}[c]{35pc}\label{d.T=1}
$T\ =\ \{xz,\,wx,\,ww,\,zw,\,zx\}.$
\end{minipage}\end{equation}
and let
\begin{equation}\begin{minipage}[c]{35pc}\label{d.A=1}
$A\ =\ [k\<]\langle\langle x,w,z\rangle\rangle/((T)).$
\end{minipage}\end{equation}
For convenient calculation with ideals, we also introduce the notation
\begin{equation}\begin{minipage}[c]{35pc}\label{d.k+A=1}
$k+A\ =\ k\<\langle\langle x,w,z\rangle\rangle/((T)).$
\end{minipage}\end{equation}

On $A,$ which by our preceding
discussion is pro-nilpotent, consider the operator
$-\,l_x r_z\in M(A)\subseteq\lm M(A_i).$
Since the latter algebra is Jacobson radical, $-\,l_x r_z$ is
quasiinvertible in $\En(A);$ so let $y=(1-l_x r_z)^{-1}(w).$
Clearly, this has the form
\begin{equation}\begin{minipage}[c]{35pc}\label{d.y=1}
$y\ =\ w+xwz+x^2wz^2+\dots+x^nwz^n+\dots\ .$
\end{minipage}\end{equation}
(Indeed, one can see immediately that this satisfies $w=y-xyz.)$

We claim that $y\notin (w).$
To see this, note that every element of $(w)$ is a {\em finite} sum
\begin{equation}\begin{minipage}[c]{35pc}\label{d.sumaiwbi}
$\sum_{i=1}^n\,a_i\,w\,b_i\quad(a_1,\dots,a_n,\,b_1,\dots,b_n\in k+A).$
\end{minipage}\end{equation}
Now an element $a$ such that every monomial occurring in $a$ contains
a factor $w$ or $z$ will annihilate $w$ on the
left, and elements in which all monomials occurring contain
factors $w$ or $x$ likewise annihilate $w$
on the right (see~(\ref{d.xiwzj}),~(\ref{d.T=1})); so let us write
each $a_i$ in~(\ref{d.sumaiwbi}) as $a_i'+a_i'',$
where $a_i'\in k[[x]],$ while the monomials occurring in
$a_i''$ all have factors $w$ or $z,$ and each
$b_i$ as $b_i'+b_i'',$ where $b_i'\in k[[z]]$ while the monomials
occurring in $b_i''$ all have factors $w$ or $x.$
Then~(\ref{d.sumaiwbi}) becomes
\begin{equation}\begin{minipage}[c]{35pc}\label{d.sumai'wbi'}
$\sum_{i=1}^n\,a_i'\,w\,b_i'\quad(a_1',\dots,a_n'\in k[[x]],
\ b_1',\dots,b_n'\in k[[z]]).$
\end{minipage}\end{equation}

We now see that if in~(\ref{d.sumai'wbi'}) we take the right
coefficient, in $k[[z]],$ of $x^j w$ for any $j\geq 0,$ this will be
a $\!k\!$-linear combination of $b_1',\dots,b_n'.$
In particular,
\begin{equation}\begin{minipage}[c]{35pc}\label{d.f.dim}
the $\!k\!$-vector-subspace of $k[[z]]$ spanned by the
right coefficients in that algebra of the words $x^j w$
$(j=0,1,\dots)$ is finite-dimensional over $k.$
\end{minipage}\end{equation}

However, by~(\ref{d.y=1}),
the right coefficient of $x^j w$ in $y$ is $z^j.$
The elements $z^j$ span an infinite-dimensional subspace of
$k[[z]],$ so $y\notin (w)=(y-xyz),$ proving our first
assertion about this example.

Since $y=xyz$ in $B,$ we have $y\in ByB\subseteq B(ByB)B\subseteq\dots,$
hence $y$ maps to $0$ in every nilpotent homomorphic image
of $B,$ so $B$ is not residually nilpotent.

Of the final assertions, $A$ is radical because
it is an inverse limit of radical algebras, while $B$ is because
it is a homomorphic image of $A.$
We have shown that $l_x r_z\in M(B)$ is not
quasiinvertible, hence the same is necessarily true of the element of
$M(A)$ denoted by the same symbol, which maps to it;
hence neither $M(A)$ nor $M(B)$ is radical.
Finally, the maps $A\to M(A)$ given by $a\mapsto l_a$
and $a\mapsto r_a$ are a homomorphism and an
antihomomorphism, so the quasiinvertibility
of $x$ and $z$ in $A$ implies quasiinvertibility of $l_x$ and $r_z$
in $M(A),$ hence also in $M(B).$
\end{proof}

We remark that if we write $A'=[k\<]\langle\langle x,w,z\rangle\rangle,$
so that $A=A'/((T)),$ and define $y=(1-l_x l_z)^{-1}(w)$
in the pro-nilpotent algebra $A',$ then the fact that
$y\notin(w)$ in $A$ implies that the same holds in $A';$
so the conclusions proved above for $A$ and $B$
also hold for $A'$ and $B'=A'/(w).$
Dividing out by $((T))$ just made it easier for us
to see what we were doing.

In a different direction, suppose that instead of dividing
a pro-nilpotent algebra $A$ by the ideal generated
by an element of the
form $r=y-xyz=(1-l_x r_z)(y),$ we had divided such an algebra
by the ideal generated by an element of the
form $s=y-xy=(1-l_x)(y).$
Using the fact that $-x,$ and hence $l_{-x},$ are quasiinvertible,
we find that in this case, $y$ {\em does} belong to the ideal $(s).$
So it goes to zero in our factor-ring; thus, this simpler
construction does not give an example of non-injectivity.
The same, of course, happens if we divide out by an element
of the form $t=y-yz=(1-r_z)(y).$
So the two-sided nature of the operator $l_x r_z$ was needed
to make Example~\ref{E.y-xyz} work.

However, the fact that $a\mapsto l_a$ is a homomorphism $A\to M(A),$
which we used in the preceding paragraph to conclude that
$l_{-x}$ was quasiinvertible, holds only for associative algebras $A.$
In the next example, we shall find that on an inverse
limit of nilpotent Lie algebras, an operator of the
form $1-l_x$ can fail to be quasiinvertible.
(In fact, that example will be ``one-sided'' from the Lie point of
view, but ``two-sided'' from the associative point of view.)

The construction will be formally a little simpler than the preceding
example, but the verification will be a bit more complicated.

\begin{example}\label{E.y-xy+yx}
There exists a pro-nilpotent associative algebra $A$ over a field $k$
having elements $x,y$ such that $y\notin (y-xy+yx).$
Thus, under commutator brackets, $A$ is a pro-nilpotent Lie algebra
with elements $x,y$ such that $y\notin (y-[x,y])_\mathrm{Lie}$
\textup{(}where $(\ )_{\mathrm{Lie}}$ means
``Lie ideal generated by''\textup{)}.

Hence, in the Lie algebra $B=A/(y-[x,y])_\mathrm{Lie},$
$1-\mathrm{ad}_x$ annihilates the nonzero element~$y.$
In particular,
$0\neq y\in [B,\,y],$ so $B$ is not residually nilpotent.

As in the previous example, the associative algebras $A$ and
$A/(y-[x,y])$ are Jacobson radical, but their multiplier algebras
are not: $r_x-l_x$ is not quasiinvertible
\textup{(}though $r_x$ and $l_x$ are\textup{)}.
\end{example}

\begin{proof}[Construction and proof]
This time, let us start with the associative
algebra $[k\<]\langle\langle x,w\rangle\rangle/((T)),$
where $T$ is chosen so that the only nonzero monomials are the words
\begin{equation}\begin{minipage}[c]{35pc}\label{d.xiwxj}
$x^iw\,x^j$ $(i,j\geq 0)$ and their subwords.
\end{minipage}\end{equation}
Thus, we take
\begin{equation}\begin{minipage}[c]{35pc}\label{d.T=2}
$T\ =\ \{w x^i w\mid i\geq 0\},$
\end{minipage}\end{equation}
and let
\begin{equation}\begin{minipage}[c]{35pc}\label{d.A=2}
$A\ =\ [k\<]\langle\langle x,w\rangle\rangle/((T)).$
\end{minipage}\end{equation}

We now define
\begin{equation}\begin{minipage}[c]{35pc}\label{d.y=2}
$y\ =\ (1-l_x+r_x)^{-1}(w)\ \in\ A.$
\end{minipage}\end{equation}

Though the obvious way to begin the calculation of this
element would be to write
$(1-l_x+r_x)^{-1}=\sum_{i=0}^\infty\ (l_x-r_x)^i,$ we can
get the coefficient of $x^i w$ in~(\ref{d.y=2}) more
quickly if we instead use the formula
\begin{equation}\begin{minipage}[c]{35pc}\label{d.1-ad-1}
$(1-l_x+r_x)^{-1}\ =\ \sum_{i=0}^\infty\ l_x^i\,(1+r_x)^{-1-i},$
\end{minipage}\end{equation}
which is valid because $l_x$ and $1+r_x$ commute.
This gives
\begin{equation}\begin{minipage}[c]{35pc}\label{d.y=3}
$y\ =\ \sum_{i=0}^\infty\ x^i\,w\,(1+x)^{-1-i}$
\end{minipage}\end{equation}
in the formal power series algebra
$A=[k\<]\langle\langle x,w\rangle\rangle/((T)).$

Again, if this lay in $(w),$ it would follow that the right
factors $(1+x)^{-1-i}$ $(i=0,1,\dots)$ of the monomials $x^i\,w$
would lie in a finite-dimensional $\!k\!$-subspace of $[k\<][[x]].$
But they not: the positive and negative
powers of $1+x$ are $\!k\!$-linearly independent in the
field $k(x),$ so they are $\!k\!$-linearly independent in the
larger formal Laurent series field
$k((x)),$ hence in the smaller
formal power series algebra $k[[x]]\subseteq A.$

Hence $y\notin (w)=(1-l_x+r_x)(y)=(y-xy+yx),$ and since the
Lie ideal generated by $y-xy+yx=y-[x,y]$ is contained in the
associative ideal generated by that element, we likewise
have $y\notin (y-[x,y])_\mathrm{Lie}.$

Again, the other assertions follow.
\end{proof}

The above example may seem suspicious:
The elements $x$ and $y$ span a $\!2\!$-dimensional sub-Lie-algebra
$B^*$ of $B,$ so
suppose we let $A^*$ be the inverse image of this algebra in $A,$
and replace each member of the family of algebras $A_i$
of which $A$ is the inverse limit by the image $A_i^*$ of $A^*$ therein.
Won't the resulting inverse system have $A^*$ as inverse limit,
giving a description of the finite-dimensional non-nilpotent
Lie algebra $B^*$ as a homomorphic image of an inverse limit $A^*$ of
nilpotent Lie algebras?

What's wrong here is the assumption that the
inverse limit of the $A_i^*$ will be $A^*.$
Rather, one finds that that inverse limit will be the closure
of $A^*$ in the inverse limit topology on $A.$
Since the map $A\to B$ is not continuous in that topology
(its kernel is $(y-[x,y])_\mathrm{Lie},$ not
$((y-[x,y]))_\mathrm{Lie}),$ the closure of $A^*$ may
have a much larger image than $B^*.$

Another thought:
Looking intuitively at Examples~\ref{E.y-xyz} and~\ref{E.y-xy+yx},
we can say that in each, we took a pro-nilpotent algebra $A,$
and were able to arrange for an element
$y\in A$ to ``survive'' under a homomorphism $A\to B$
that made it fall together with a member of $AyA$ or $Ay+yA.$
In these cases, $y$ survived ``with the help of'' other
elements $(x,$ and possibly $z)$ which did not themselves fall together
with higher-degree expressions.
We may ask whether a family $X$ of elements can
all ``help one another'' to survive under a homomorphism that makes
each fall together with a linear combination of higher degree monomials
in it and the others.
One way of posing this question is:  Can a homomorphic image $B$
of a pro-nilpotent algebra $A$ contain a nonzero
subalgebra $S$ that is idempotent, i.e., satisfies $S=S^2$?

If our algebras are associative, and the set $X$ generating
$S$ is finite, the answer is no.
Indeed, $A,$ and hence $B,$ will be
Jacobson radical, and Lemma~\ref{L.idpt}(ii) says that such an algebra
cannot have a nonzero finitely generated idempotent subalgebra.
Lemma~\ref{L.idpt}(i) describes a more general restriction.

However, these restrictions fail for nonassociative algebras.
Indeed, in Example~\ref{E.y-xy+yx} we had $y\in [B,y],$
contradicting the analog of Lemma~\ref{L.idpt}(i).
We record next a much simpler (though non-Lie) example with the same
property (which
we will want to call on for another property later), then
an example with a finite-dimensional simple subalgebra.

\begin{example}\label{E.y-xy}
Another pro-nilpotent algebra $A$ over a field $k$
having elements $x,y$ such that $y\notin (y-xy),$
hence such that on $B=A/(y-xy),$
$1-l_x$ annihilates the nonzero element $y;$ hence such
that $0\neq y\in B\,y$
\textup{(}in contrast to Lemma~\ref{L.idpt}\textup{(i))}.
\end{example}

\begin{proof}[Construction and proof]
A natural approach, paralleling our earlier constructions, would
be to start with a nonassociative $\!k\!$-algebra on
generators $x,\,w,$ in which all monomials are set to zero
except for $x,$ $w,$ $xw,$ $x(xw),$ $x(x(xw)),\dots\,.$
But rather than dealing with a free nonassociative algebra, and the
resulting proliferation of parentheses, let us simply name the
resulting basis
of our algebra, and say how the multiplication acts.
(The main value of the ``free associative algebra modulo monomials''
approach of our previous examples
was to insure that the algebra described was
associative; but no such condition is needed here.)

So let us start with an algebra having a basis
$\{x,\,w_0,\,w_1,\,w_2,\dots\},$ and multiplication given by
\begin{equation}\begin{minipage}[c]{35pc}\label{d.xwi...}
$x\,w_i\ =\ w_{i+1}$ $(i=0,1,\dots),$\quad
and all other products of basis elements zero.
\end{minipage}\end{equation}

Clearly, for each $i\geq 0,$ this algebra has a homomorphic image
$A_i$ in which all $w_j$ with $j\geq i$ are set to zero, and these
images form an inverse system of nilpotent
algebras, whose inverse limit $A$ consists of all
formal infinite sums $\alpha\,x + \sum_{i=0}^\infty\ \beta_i\,w_i$
$(\alpha,\,\beta_i\in k).$

The ideal $(w_0)$ of $A$ is easily shown to consist of the
finite sums $\beta_0\,w_0+\dots+\beta_n\,w_n.$
In particular, it does not contain the element
$y=\sum_{i=0}^\infty w_i=(1-l_x)^{-1} w_0,$
which satisfies $y-xy=w_0.$
This gives the first assertion;
the remaining assertions follow immediately.
\end{proof}

Still more striking is

\begin{example}\label{E.y-yy}
There exists a pro-nilpotent algebra $A$ over a field $k$
having an element $y$ such that $y\notin (y-y^2).$

Hence in $B=A/(y-y^2),$ the element $y$ spans
an idempotent $\!1\!$-dimensional \textup{(}associative!\textup{)}
subalgebra, in contrast to Lemma~\ref{L.idpt}\textup{(ii)}.
\end{example}

\begin{proof}[Construction and proof]
This time, we start with an algebra having basis
$\{w_0,\,w_1,\dots\},$ and multiplication given by
\begin{equation}\begin{minipage}[c]{35pc}\label{d.xwizi}
$w_i\,w_i\ =\ w_{i+1}$ $(i=0,1,\dots),$\quad
and all other products of basis elements zero.
\end{minipage}\end{equation}

We again get nilpotent homomorphic images $A_i$ on
setting $w_j$ equal to zero for all $j\geq i.$
The inverse limit
$A$ of these algebras consists of all formal infinite sums
\begin{equation}\begin{minipage}[c]{35pc}\label{d.sum}
$\sum_{i=0}^n\,\alpha_i\,w_i\quad (\alpha_i\in k).$
\end{minipage}\end{equation}
Again, it is not hard to see that
\begin{equation}\begin{minipage}[c]{35pc}\label{d.w0z0}
the ideal $(w_0)$ of $A$ consists of all
finite sums $\alpha_0\,w_0+\dots+\alpha_n\,w_n.$
\end{minipage}\end{equation}

Again let $y=\sum_{i=0}^\infty\ w_i.$
We find that $y-y^2=w_0,$
so $(y-y^2)$ is $(w_0),$ the ideal described in~(\ref{d.w0z0}),
which clearly does not contain $y.$
This proves the main assertion; the final statement again follows.
\end{proof}

Even for associative algebras, Lemma~\ref{L.idpt}(ii)
only excludes nonzero {\em finitely generated} idempotent subalgebras.
An easy example of a radical associative algebra $B$ with a
non-finitely-generated idempotent subalgebra $S$
is gotten by taking for both $B$ and $S$ the maximal ideal of any
nondiscrete valuation ring.
It is harder to get an example with $B$ a homomorphic image of a
pro-nilpotent algebra, but the following celebrated construction of
S\c{a}siada and Cohn~\cite{ES+PMC} has that property.
(For parallelism with the other examples of this section, I have
interchanged below the use of the symbols $x$ and $y$
in~\cite{ES+PMC}.)
Note that the ideal $(y)$ in the statement, though generated by
a single element as a $\!2\!$-sided ideal of $B,$ may (and must,
by Lemma~\ref{L.idpt}(i)-(ii)) be infinitely generated both as a left
ideal of $B$ and as a subalgebra.

\begin{example}[{S\c{a}siada and Cohn \cite{ES+PMC}}]\label{E.ES+PMC}
If $k$ is a field, then in the pro-nilpotent
associative algebra $A=[k\<]\langle\langle x,\,y\rangle\rangle,$
one has $y\notin (y-x\,y^2\,x).$

Thus, in $B=A/(y-x\,y^2\,x),$ the ideal $(y)$
satisfies $B(y)=(y)$ as a left ideal, and $(y)^2=(y)$ as a subalgebra.
Moreover, if $U$ is a maximal ideal of $B$ not containing $y,$
then in $B'=B/U,$ the subalgebra $(y)$ is simple.
Thus, the ideal $(y)$ of $B'$ is a simple Jacobson radical algebra.
\end{example}

\begin{proof}[Sketch of proof.]
The proof that $y\notin (y-x\,y^2\,x)$ occupies most of the
five pages of~\cite{ES+PMC}, and I will not discuss it here.

That relation clearly yields the asserted equalities $B(y)=(y)$
(which we also had in Example~\ref{E.y-xyz})
and $(y)^2=(y)$ (which we did not have in our previous
associative examples).

Let us now prove the simplicity, as a ring, of the
ideal $(y)$ of $B'=B/U,$ though this was also done in~\cite{ES+PMC}.
By maximality of $U,$ $(y)$ contains no proper nonzero $\!B'\!$-ideal;
suppose, however, that in
contradiction to our desired conclusion, it contained
a proper nonzero $\!(y)\!$-ideal $V.$
If $(y)V=\{0\},$ then the right annihilator
of $(y)$ in $(y),$ which is clearly an ideal of $B'$ properly
contained in $(y),$ is nonzero, a contradiction.
So $(y)V\neq\{0\}.$
Knowing this, we see in
turn that if $(y)V(y)=\{0\},$ then the left annihilator
of $(y)$ in $(y)$ gives the same contradiction.
Hence $(y)V(y)\neq\{0\}.$
But this, too, is an ideal of $B'$ contained in $V,$
hence properly contained in
$(y),$ a final contradiction that completes the proof.

In the final sentence of the lemma
(which was the goal of \cite{ES+PMC}), radicality
holds because any ideal of a radical ring is radical.
\end{proof}

Examples~\ref{E.y-xyz}-\ref{E.ES+PMC} can all be thought of as
illustrating, in one way or another, the fact
that the conclusion of Theorem~\ref{T.main}(ii) can fail if one
deletes the hypothesis that the underlying $\!k\!$-module of $B$
is Hopfian.
We end this section with a much easier example showing that even if
that module is Hopfian, this is not enough to give the full assertion
of part~(iii).

\begin{example}\label{E.hopf}
There exist a commutative ring $k$
and a pro-nilpotent commutative associative $\!k\!$-algebra
$A$ which is Hopfian as a $\!k\!$-module, but not nilpotent
as an algebra.
\end{example}

\begin{proof}[Construction and proof]
Let $k$ be a complete discrete valuation ring, with
maximal ideal $(p),$ and consider the inverse system of nilpotent
algebras $A_i=(p)/(p^i)$ $(i\geq 1)$ with the obvious surjective
connecting homomorphisms.
Because $k$ is complete, the inverse limit $A$ of this
system is isomorphic as a $\!k\!$-algebra
to the ideal $(p)\subseteq k,$ which is free of rank~$1$ as a
$\!k\!$-module, hence is Hopfian, but is not nilpotent as an algebra.
\end{proof}

(If we had left out the assumption that $k$ was complete, our $A$
would have been the maximal ideal $p\,\hat{k}$ of the completion
$\hat{k}$ of $k.$
In that situation, $\hat{k},$ and hence that ideal, would again be
Hopfian as a $\!k\!$-module, but for less obvious reasons.)

\section{A chain of conditions.}\label{S.chain}
The proof of Theorem~\ref{T.main} involves a chain of
conditions on an algebra $A:$
\vspace{.5em}
\begin{equation}\label{d.nilp}
\mbox{$A$ is nilpotent; equivalently, $M(A)$ is nilpotent.}
\end{equation}
$$\Downarrow$$
\begin{equation}\begin{minipage}[c]{18pc}\label{d.rad}
$M(A)$ is Jacobson radical; equivalently, for every
$u\in M(A),$ $1+u$ is invertible in $1+M(A).$
\end{minipage}\end{equation}
$$\Downarrow$$
\begin{equation}\label{d.M_in_rad}
\mbox{$M(A)$ is contained in a Jacobson radical subalgebra of $\En(A).$}
\end{equation}
$$\Downarrow$$
\begin{equation}\begin{minipage}[c]{20pc}\label{d.mxsurj}
For every $n>0$ and $u\in\mathrm{Mat}_n(M(A)),$ $1+u$
is \mbox{surjective} as a map on the direct sum of $n$ copies of~$A.$
\end{minipage}\end{equation}
$$\Downarrow$$
\begin{equation}\label{d.surj}
\mbox{For every $u\in M(A),$ $1+u$ is surjective as a map $A\to A.$}
\end{equation}
\vspace{.5em}

We note some quick examples showing that the first four of these
conditions are distinct:

In any commutative local integral domain which is not
a field, the maximal ideal $A$
is a radical subalgebra, and satisfies $M(A)\cong A,$ hence
$A$ satisfies~(\ref{d.rad}), but not~(\ref{d.nilp}).

Amplifying the comment preceding Example~\ref{E.hopf}, we note that
for the algebras $A$ of Examples~\ref{E.y-xyz}-\ref{E.ES+PMC} the
inclusion $M(A)\subseteq\lm_I M(A_i)$
yields~(\ref{d.M_in_rad}), but that these
algebras cannot satisfy~(\ref{d.rad}), since they have
homomorphic images $B$ on which certain operators $1+u$ $(u\in M(B))$
are non-invertible.

The algebras $B$ of those same examples
satisfy~(\ref{d.mxsurj}) by Theorem~\ref{T.main}(i),
but they have elements $u\in M(B)$ such that $1+u$ is not injective,
hence not invertible, so they do not satisfy~(\ref{d.M_in_rad}).

I do not know whether there are algebras satisfying~(\ref{d.surj})
but not~(\ref{d.mxsurj}).
\vspace{.5em}

Conditions~(\ref{d.nilp}), (\ref{d.rad}), (\ref{d.mxsurj})
and~(\ref{d.surj}) clearly carry over to homomorphic images; but
Examples~\ref{E.y-xyz}-\ref{E.ES+PMC} show
that~(\ref{d.M_in_rad}) does not;
though Proposition~\ref{P.Hopfian} shows that it does
when the image algebra is Hopfian as a $\!k\!$-module.

Condition~(\ref{d.nilp}) carries over to subalgebras,
but none of the others do.
E.g., in a discrete
valuation ring, such as the localization $\mathbb{Z}_{(p)}$
of $\mathbb{Z}$ at a prime $p$
(notation unrelated to the $A_{(n)}$ of \S\ref{S.np}!),
or a formal power series algebra $k[[t]]$ over a field
$k,$ the maximal ideal
(in these cases, $p\,\mathbb{Z}_{(p)},$ respectively $[k\<][[t]]),$
regarded as an algebra, satisfies~(\ref{d.rad}), and
hence~(\ref{d.M_in_rad})-(\ref{d.surj});
but in these two examples, the $\!\mathbb{Z}\!$-subalgebra
$p\,\mathbb{Z}\subseteq p\,\mathbb{Z}_{(p)},$ respectively
the $\!k[t]\!$-subalgebra $[k\<][t]\subseteq [k\<][[t]],$ fails to
satisfy~(\ref{d.surj}), hence likewise~(\ref{d.rad})-(\ref{d.mxsurj}).

What about inverse limits; say with respect to inverse systems
where the $p_i$ are surjective?
Examples~\ref{E.y-xyz}-\ref{E.ES+PMC} show that~(\ref{d.nilp})
and~(\ref{d.rad}) fail to carry over to these.
Probably~(\ref{d.mxsurj}) and~(\ref{d.surj}) do not carry over either
-- those conditions make the maps $1+u$ surjective
but do not make inverse images of elements under those maps
unique, and this leads to no way of lifting such inverse images to the
inverse limit algebra
(unless the indexing set $I$ has countable cofinality).
Condition~(\ref{d.M_in_rad}) seemed the most likely to
yield a positive result for surjective inverse limits.
If for each $i$ we let $N(A_i)$ denote the least radical
subalgebra of $\En(A_i)$ containing $M(A_i),$ we might hope to use
the fact that an inverse limit of radical algebras is radical.
Unfortunately, it does not appear that the connecting maps
$f_{ji}$ will induce maps $N(A_i)\to N(A_j):$ without a Hopfian
condition on $A_i,$ there is no reason why the quasiinverse
of a map carrying $\ker(f_{ji})$ into itself should
likewise carry $\ker(f_{ji})$ into itself.
So we have no positive results for any of our conditions.
\vspace{.5em}

We remark that variants of~(\ref{d.mxsurj}) and~(\ref{d.surj})
in which ``surjective'' is replaced by ``injective'' or by
``bijective'' might also be of interest.

\section{A Nakayama-like property.}\label{S.Nak-like}

In the proof of Theorem~\ref{T.main}, we obtained statement~(iii)
from statement~(ii) essentially by showing that
for an algebra of finite length as a
module, condition~(\ref{d.M_in_rad}) implies~(\ref{d.nilp}).
On the other hand, we did not obtain~(ii) directly from~(i)
-- I do not know whether for algebras that are Hopfian as
modules,~(\ref{d.mxsurj}) or~(\ref{d.surj}) implies~(\ref{d.M_in_rad}).
(If the implication requires the stronger statement~(\ref{d.mxsurj}),
then it probably needs not only the hypothesis that $A$ is
Hopfian, but that all finite direct sums $\bigoplus_n A$ are Hopfian.)
If $A$ is Hopfian and satisfies~(\ref{d.surj}),
the maps $1+u$ $(u\in M(A))$ are invertible, hence such $u$
are quasiinvertible in $\En(A);$ but if
we try to extend $M(A)$ within $\En(A)$ to a subalgebra
closed under quasiinverses, it could happen that
the new quasiinvertible elements we introduce (quasiinverses of
old elements) will yield sums or products that fail to be
quasiinvertible (like $l_x r_z$ and $l_x+r_x$
in Examples~\ref{E.y-xyz} and~\ref{E.y-xy+yx});
so we could fail to get~(\ref{d.M_in_rad}).

However, whether or not we can deduce~(\ref{d.M_in_rad}),
if~(\ref{d.mxsurj}) holds and the
modules $\bigoplus_n A$ are Hopfian, then
$M(A)$ will behave somewhat like a radical algebra,
in that it will satisfy a version of Nakayama's lemma.
Let us formulate this result with $M(A)$ and $A$ generalized
to an arbitrary ring and module.

\begin{lemma}\label{L.Nak-like}
Let $R$ be a nonunital associative ring,
$A$ a left $\!R\!$-module, and $n$ a positive integer.
Suppose that for every $u\in\mathrm{Mat}_n(R),$ the
element $1+u$ acts in a one-to-one fashion on $\bigoplus_n A.$

Then for any $\!n\!$-generator $\!R\!$-submodule
$C$ of $A$ \textup{(}or more generally,
for any $\!n\!$-generator $\!R\!$-submodule $C$ of a direct
product of copies of $A),$ one has $R\,C=C\implies C=\{0\}.$
\end{lemma}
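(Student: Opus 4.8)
I want to prove that if $C$ is an $n$-generator $R$-submodule of $A$ (or of a direct product of copies of $A$) satisfying $RC = C$, then $C = \{0\}$. The hypothesis is that for every $u \in \mathrm{Mat}_n(R)$, the element $1+u$ acts injectively on $\bigoplus_n A$. The plan is to exploit the $n$ generators to encode the condition $RC = C$ as a single matrix equation over $R$, and then invoke the injectivity hypothesis in the form of a $1+u$ acting on $\bigoplus_n A$.

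**The key computation.** Let $c_1, \dots, c_n$ be $R$-module generators of $C$, and form the column vector $c = (c_1, \dots, c_n)^{\mathrm{T}} \in \bigoplus_n A$. The condition $RC = C$ says in particular that each generator $c_j$ lies in $RC$, so each $c_j$ can be written as a finite $R$-linear combination of the $c_i$; collecting these expressions, there is a matrix $u = (u_{ji}) \in \mathrm{Mat}_n(R)$ with $c_j = \sum_i u_{ji}\, c_i$ for all $j$. In vector form this reads $c = u\,c$, i.e.\ $(1 - u)\,c = 0$ in $\bigoplus_n A$. Now $-u \in \mathrm{Mat}_n(R)$, so by hypothesis $1 + (-u) = 1 - u$ acts injectively on $\bigoplus_n A$; hence $c = 0$, which forces every $c_i = 0$, and therefore $C = \{0\}$.

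**The main obstacle.** The delicate point is the passage from ``$C$ is generated by $c_1,\dots,c_n$ as an $R$-module'' together with ``$RC = C$'' to the existence of the matrix $u$ with $c_j = \sum_i u_{ji} c_i$. Because $R$ is \emph{nonunital}, an element of $C = RC$ is a priori a finite sum $\sum_k r_k x_k$ with $r_k \in R$ and $x_k \in C$; I must then re-express each $x_k$ in terms of the generators $c_i$ to obtain $c_j$ as $\sum_i u_{ji} c_i$ with coefficients $u_{ji}$ genuinely in $R$ (not in $R$ adjoined a unit). This works precisely because $x_k \in C$ is itself an $R$-linear combination of the $c_i$, and $R$ is closed under multiplication, so composing $r_k$ with those coefficients keeps us inside $R$. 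I should state this reduction carefully, as it is exactly where nonunitality could cause trouble if one carelessly wanted the $c_j$ themselves (rather than $RC$) to be $R$-combinations of the generators.

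**The product-of-copies case.** The more general statement, where $C$ is an $n$-generator submodule of a direct product $\prod_\lambda A$ of copies of $A$, needs no new idea: the same matrix equation $(1-u)c = 0$ now lives in $\bigoplus_n \bigl(\prod_\lambda A\bigr) \cong \prod_\lambda \bigl(\bigoplus_n A\bigr)$, and injectivity of $1-u$ on each factor $\bigoplus_n A$ gives injectivity on the product; so $c = 0$ coordinatewise, hence $C = \{0\}$. I would mention this as a one-line remark after the main argument.
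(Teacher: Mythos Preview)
Your argument is correct and matches the paper's: encode the generators as a vector $c\in\bigoplus_n A$, use $RC=C$ to write $c=uc$ for some $u\in\mathrm{Mat}_n(R)$, then apply injectivity of $1-u$ to conclude $c=0$. The only cosmetic difference is that the paper disposes of the product-of-copies case at the outset by projecting onto a coordinate where $C$ has nonzero image (reducing to $C\subseteq A$), whereas you handle it at the end by observing that injectivity of $1-u$ on $\bigoplus_n A$ passes to $\prod_\lambda\bigl(\bigoplus_n A\bigr)$; both are equally valid.
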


\begin{proof}
The case where $C$ is a submodule of a direct product
reduces immediately to the case $C\subseteq A$ by projecting
onto any coordinate where some member of $C$ has nonzero component.
So assume $C$ is a submodule of $A,$ generated by $x_1,\dots,x_n.$

The relation $R\,C=R$ means that each $x_m$ can be written
as an $\!R\!$-linear combination of itself and the others;
which says that if we let $x=(x_1,\dots,x_n)\in \bigoplus_n A,$
then for some $u\in\mathrm{Mat}_n(R),$ we have $x=ux.$
But this says that $1-u$ annihilates $x,$ contradicting our hypothesis.
\end{proof}

Using the above result, we can
get part~(iii) of Theorem~\ref{T.main} directly from part~(i),
via the observation

\begin{corollary}\label{C.i>iii}
For $A$ an algebra of finite length as a
$\!k\!$-module,\textup{~(\ref{d.mxsurj})}
implies\textup{~(\ref{d.nilp})}.
\textup{(}So for such
$A,$\textup{~(\ref{d.nilp})-(\ref{d.mxsurj})} are equivalent.\textup{)}
\end{corollary}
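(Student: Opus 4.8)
The plan is to combine Corollary~\ref{C.i>iii}'s hypothesis~(\ref{d.mxsurj}) with the Nakayama-like Lemma~\ref{L.Nak-like}, applied to the ring $R=M(A)$ and module $A,$ and then to chase the resulting vanishing condition through the descending chain $A=A_{[1]}\supseteq A_{[2]}\supseteq\dots$ of~(\ref{d.weak_series}). The point is that Lemma~\ref{L.Nak-like} is exactly the tool that converts surjectivity information~(\ref{d.mxsurj}) into a statement that certain self-reproducing submodules must vanish, and finite length is what will guarantee that the relevant submodule is self-reproducing.

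First I would observe that since $A$ has finite length as a $\!k\!$-module, so does every finite direct sum $\bigoplus_n A,$ and hence each such sum is Hopfian. By hypothesis~(\ref{d.mxsurj}), for every $u\in\mathrm{Mat}_n(M(A))$ the map $1+u$ is surjective on $\bigoplus_n A;$ Hopfianness upgrades this to bijectivity, so $1+u$ is in particular injective. This is precisely the hypothesis of Lemma~\ref{L.Nak-like} with $R=M(A),$ so I may invoke that lemma: every finitely generated $\!M(A)\!$-submodule $C\subseteq A$ with $M(A)\,C=C$ satisfies $C=\{0\}.$

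Next I would identify the submodule to feed into this. By finite length, the descending chain $A_{[1]}\supseteq A_{[2]}\supseteq\dots$ of~(\ref{d.descend}) must stabilize, say $A_{[m]}=A_{[m+1]}$ for some $m.$ Recall from~(\ref{d.M^nA}) that $A_{[n+1]}=M(A)^n(A),$ and in particular $A_{[m+1]}=(l_A+r_A)\,A_{[m]}=M(A)\,A_{[m]}$ using the recursive step $A_{[n+1]}=(l_A+r_A)A_{[n]}$ noted after~(\ref{d.M(A)^n+1}). Thus the stable term $C:=A_{[m]}$ satisfies $M(A)\,C=C.$ It remains only to check that $C$ is finitely generated as an $\!M(A)\!$-module; but $C$ is a $\!k\!$-submodule of the finite-length module $A,$ hence is itself finitely generated over $k,$ and $k\subseteq\En(A)$ acts through scalars while $M(A)$ acts $\!k\!$-linearly, so a $\!k\!$-module generating set of $C$ certainly generates it over the larger ring $M(A).$ Applying Lemma~\ref{L.Nak-like} with $n$ the number of such generators gives $C=\{0\},$ i.e.\ $A_{[m]}=\{0\},$ which is condition~(i) of Lemma~\ref{L.nilp} and hence~(\ref{d.nilp}).

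The main obstacle, such as it is, is the bookkeeping that turns surjectivity into injectivity: one must be careful that $\bigoplus_n A$ is genuinely Hopfian (which is immediate from finite length, the nontrivial direct-sum pathologies mentioned after~(\ref{d.strict_chain}) not arising here) so that~(\ref{d.mxsurj}) really does supply the one-to-one hypothesis of Lemma~\ref{L.Nak-like}, rather than only surjectivity. Once that is in hand the argument is a direct application of the lemma to the stabilized term of the lower central-type series, with no further computation needed.
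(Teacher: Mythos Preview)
Your proof is correct and follows essentially the same route as the paper's: both stabilize the descending chain $A_{[m]}=M(A)^{m-1}(A)$ (these are the same by~(\ref{d.M^nA})), use finite length to get a finitely generated $C$ with $M(A)\,C=C$, upgrade the surjectivity in~(\ref{d.mxsurj}) to injectivity via Hopfianness of $\bigoplus_n A$, and invoke Lemma~\ref{L.Nak-like}. One small wording quibble: $M(A)$ is nonunital and does not contain $k$, so ``the larger ring $M(A)$'' is not quite right; what makes the lemma applicable is that $C$ is spanned over $k$ by $x_1,\dots,x_n$, whence $M(A)\,C\subseteq\sum_i M(A)\,x_i$, which is exactly what the proof of Lemma~\ref{L.Nak-like} uses.
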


\begin{proof}
If~(\ref{d.nilp}) fails, then the decreasing chain of submodules
$M(A)^d(A)$ of $A$ $(d=0,1,\dots)$ never becomes zero;
but by the finite length assumption, it must stabilize.
Thus, say $\{0\}\neq C=M(A)^d(A)$ satisfies $C\ =\ M(A)(C).$
By our finite length hypothesis, $C$
is finitely generated, say by $n$ elements.

Since a module of finite length is
Hopfian, condition~(\ref{d.mxsurj}) says
that all elements $1+u$ $(u\in\mathrm{Mat}_n(M(A)))$
act invertibly on $\bigoplus_n A,$ hence in a one-to-one fashion;
so Lemma~\ref{L.Nak-like} says that $C=\{0\},$ a contradiction.
\end{proof}

The next corollary to Lemma~\ref{L.Nak-like} shows that
Theorem~\ref{T.main}(ii) has concrete consequences for the
structure of Hopfian homomorphic images of pro-nilpotent algebras.
(Note that the hypothesis of finite generation as an ideal
is weaker than finite generation as a one-sided ideal.)

\begin{corollary}\label{C.not_idpt}
A nonzero $\!k\!$-algebra $A$ satisfying\textup{~(\ref{d.mxsurj})}
\textup{(}in particular, a nonzero homomorphic image of
a pro-nilpotent algebra\textup{),} which
has the property that for all $n$
the $\!k\!$-module $\bigoplus_n A$ is Hopfian, cannot be
idempotent as a $\!k\!$-algebra and finitely generated as an ideal.
\end{corollary}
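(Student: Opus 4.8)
The plan is to argue by contradiction and reduce the whole statement to Lemma~\ref{L.Nak-like}, applied with $R=M(A)$ and with $A$ regarded as a left $\!M(A)\!$-module. So suppose $A\neq\{0\}$ is idempotent and is generated as an ideal by a finite set $X=\{x_1,\dots,x_n\}$. First I would verify that the module-theoretic hypothesis of Lemma~\ref{L.Nak-like} holds for this $n$: condition~(\ref{d.mxsurj}) says that each $1+u$ with $u\in\mathrm{Mat}_n(M(A))$ acts surjectively on $\bigoplus_n A,$ and since $\bigoplus_n A$ is Hopfian by hypothesis, every such surjection is injective, hence one-to-one, which is exactly what the lemma requires.

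The crux of the argument is then to present $A$ itself as an $\!n\!$-generator $\!M(A)\!$-submodule of $A$ satisfying $M(A)\,A=A.$ Here some care is needed: finite generation as an ideal only yields the weaker equality $A=kX+M(A)\,X$ (the ideal generated by $X$ being $kX+M(A)X$), and the summand $kX$ blocks us from reading off that $A$ is finitely generated as a \emph{pure} $\!M(A)\!$-module. The device for absorbing that summand is idempotence: since $A=A^2$, which by definition~(\ref{d.weak_series}) equals $A_{[2]},$ formula~(\ref{d.M^nA}) in the case $n=1$ gives $A=M(A)(A).$ Substituting the ideal description into this, and using $\!k\!$-linearity of the operators in $M(A)$ together with $M(A)^2\subseteq M(A),$ I would compute $A=M(A)\,A=M(A)(kX+M(A)X)=M(A)X+M(A)^2X=M(A)X.$ Thus $A=M(A)X$ is generated by the $n$ elements of $X$ as an $\!M(A)\!$-module, while simultaneously satisfying $M(A)\,A=A.$

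With this in hand the conclusion is immediate: Lemma~\ref{L.Nak-like}, applied to the $\!n\!$-generator $\!M(A)\!$-submodule $C=A$ with $M(A)\,C=C,$ forces $A=\{0\},$ contradicting $A\neq\{0\}.$ I expect the only genuine obstacle to be the middle step --- recognizing that idempotence is exactly what converts ``finitely generated as an ideal'' ($A=kX+M(A)X$) into ``finitely generated as an $\!M(A)\!$-module with $M(A)A=A$,'' so that the Nakayama-like Lemma~\ref{L.Nak-like} can be brought to bear; once that translation is in place, no further computation is required.
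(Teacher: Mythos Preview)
Your argument is correct and follows essentially the same approach as the paper's own proof: both reduce to Lemma~\ref{L.Nak-like} with $R=M(A)$ and $C=A,$ using idempotence to get $M(A)\,A=A$ and finite generation as an ideal to get finite generation as an $M(A)$-module, while the Hopfian hypothesis upgrades the surjectivity in~(\ref{d.mxsurj}) to injectivity. Your version is simply more explicit about the translation step---in particular your computation $A=M(A)(kX+M(A)X)=M(A)X$ spells out what the paper compresses into the single remark that ``the conditions of idempotence as an algebra and finite generation as an ideal say that $M(A)A=A$ and $A$ is finitely generated as an $M(A)$-module.''
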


\begin{proof}
For any algebra $A,$ the ideals of $A$ are
its $\!M(A)\!$-submodules, and we see that the conditions
of idempotence as an algebra and finite generation as an ideal say
that $M(A)A=A$ and $A$ is finitely generated as an $\!M(A)\!$-module.
However, the Hopfian condition together
with~(\ref{d.mxsurj}) yield the hypothesis of
Lemma~\ref{L.Nak-like}, implying that $A=\{0\}.$
\end{proof}

\section{Solvable Lie algebras.}\label{S.solv}
The {\em derived series} of an algebra $A$ is the
sequence of subalgebras $A^{(n)}$ $(n=0,1,\dots)$ defined by
\begin{equation}\begin{minipage}[c]{35pc}\label{d.der_series}
$A^{(0)}=A,\qquad A^{(n+1)}= A^{(n)}\,A^{(n)}.$
\end{minipage}\end{equation}
This concept is standard in the theory of Lie algebras (where
the $A^{(n)}$ are in fact ideals).
It is less so for general nonassociative algebras,
but is introduced in that context in \cite[p.17]{Schafer}.

An algebra $A$ is called {\em solvable} if $A^{(n)}=\{0\}$ for
some $n\geq 0.$
It is easy to see that $A^{(n)}\subseteq A_{(2^n)}$ $(n=0,1,\dots\,),$
so every nilpotent algebra is solvable; but the converse is
not true, as shown by the $\!2\!$-dimensional Lie algebra
with basis $\{x,\,y\}$ and multiplication $[x,\,y]=y.$

There is a special characterization of solvability of
Lie algebras in the classical case:
\begin{equation}\begin{minipage}[c]{35pc}\label{d.solv}
\cite[Corollary~1 to Theorem~13, p.51.]{NJ_Lie}
If $A$ is a finite-dimensional Lie algebra over a field
of characteristic~$0,$ then $A$ is solvable
if and only if its commutator ideal $A^{(1)}=[A,\,A]$ is nilpotent.
\end{minipage}\end{equation}

Nazih Nahlus has pointed out that using
this fact, one gets as a consequence of our main theorem the
following result, which he had conjectured some years ago.

\begin{corollary}[{to Theorem~\ref{T.main}(iii).~ N.\,Nahlus (personal communication)}]\label{C.solv}
Let $A$ be an inverse limit of finite-dimensional solvable Lie
algebras $A_i$ over a field $k$ of characteristic~$0.$
Then any finite-dimensional homomorphic image $B$ of $A$ is solvable.
\end{corollary}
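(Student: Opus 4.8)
The plan is to convert the question of \emph{solvability} of $B$ into a question of \emph{nilpotence}, so that Theorem~\ref{T.main}(iii) can be brought to bear, using the classical characterization~(\ref{d.solv}) at both the beginning and the end of the argument. The whole point of~(\ref{d.solv}) is that, in the finite-dimensional characteristic-$0$ setting, solvability of a Lie algebra is equivalent to nilpotence of its commutator ideal; so the natural move is to pass to commutator ideals throughout.

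First I would form a pro-nilpotent companion to $A$ out of commutator ideals. Since each $A_i$ is a finite-dimensional solvable Lie algebra over a field of characteristic $0,$ the characterization~(\ref{d.solv}) tells us that its commutator ideal $[A_i,A_i]=A_i^{(1)}$ is \emph{nilpotent}. Each connecting map $f_{ji}$ is a Lie homomorphism, hence carries $[A_i,A_i]$ into $[A_j,A_j];$ so the submodules $[A_i,A_i],$ with the restricted connecting maps, form an inverse system of nilpotent Lie algebras. Writing $A'=\lm_I [A_i,A_i],$ this is by Definition~\ref{D.pro-np} a pro-nilpotent Lie algebra, and it sits inside $A=\lm_I A_i$ as the subalgebra of those compatible tuples all of whose coordinates lie in the corresponding commutator ideals.

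Next I would locate $[A,A]$ inside $A'.$ For $a=(a_i)_{i\in I}$ and $b=(b_i)_{i\in I}$ in $A,$ the bracket $[a,b]=([a_i,b_i])_{i\in I}$ has each coordinate in $[A_i,A_i],$ so $[A,A]\subseteq A'.$ Applying the surjective Lie homomorphism $h$ and using $[B,B]=[h(A),h(A)]=h([A,A]),$ I get $[B,B]\subseteq h(A').$ Now $h$ restricts to a surjective homomorphism $A'\to h(A'),$ and $h(A'),$ being a submodule of the finite-dimensional $B,$ has finite length; so Theorem~\ref{T.main}(iii) shows $h(A')$ is nilpotent. Since nilpotence passes to subalgebras (immediate from condition~(ii) of Lemma~\ref{L.nilp}, as $S_{(n)}\subseteq C_{(n)}$ for $S\subseteq C$), the subalgebra $[B,B]$ of $h(A')$ is nilpotent. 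Finally, $B$ is a finite-dimensional Lie algebra over a field of characteristic $0$ whose commutator ideal $[B,B]$ is nilpotent, so~(\ref{d.solv}) yields that $B$ is solvable.

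The conceptual crux — the only step that is not a routine chaining of the quoted facts — is the passage to the pro-nilpotent limit $A'=\lm_I [A_i,A_i]$ and the observations that $[A,A]\subseteq A'$ and hence $[B,B]\subseteq h(A');$ once these are in hand, Theorem~\ref{T.main}(iii) does the real work. I would note that no surjectivity reduction on the $f_{ji}$ or $p_i$ is needed here: $A'$ is manifestly pro-nilpotent as written, and $h|_{A'}$ is automatically surjective onto its image $h(A'),$ so the main theorem applies to $h(A')$ directly.
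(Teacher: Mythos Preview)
Your argument is correct and is essentially the same as the paper's: form the inverse limit $A'$ (the paper writes $A^*$) of the nilpotent commutator ideals $[A_i,A_i],$ observe $[A,A]\subseteq A',$ apply Theorem~\ref{T.main}(iii) to the finite-dimensional image $h(A'),$ and conclude that $[B,B]$ is nilpotent, hence $B$ is solvable. The only cosmetic difference is that the paper's last line deduces solvability of $B$ directly from nilpotence of $h(A^*)\supseteq[B,B]$ without explicitly reinvoking~(\ref{d.solv}), since that direction is elementary.
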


\begin{proof}
By~(\ref{d.solv}), the commutator ideals of the $A_i$
form an inverse system of nilpotent algebras.
The inverse limit $A^*\subseteq A$ of this system contains
all brackets of elements of $A;$ so
when we map  $A$ homomorphically onto a finite-dimensional algebra $B,$
the image of $A^*$ contains all brackets of elements of $B.$
Theorem~\ref{T.main}(iii) tells us that that image
is nilpotent, so $B$ is solvable.
\end{proof}

However, there are both infinite-dimensional Lie algebras $A$
in characteristic $0,$ and finite-dimensional Lie algebras $A$
in positive characteristic, which are solvable,
but for which $A^{(1)}$ is not nilpotent.

An example of the former is given by the vector space $A$ of operators
on $\mathbb{R}[x]$ spanned by the operators $X^n$ of multiplication
by $x^n$ $(n=0,1,\dots),$ together with the operator $D=d/dx,$
and the composite operator $XD=x\,d/dx.$
Indeed, one verifies that $A$
is closed under commutator brackets, hence forms a Lie algebra
(the semidirect product of the $\!2\!$-dimensional
Lie algebra $L$ spanned by $\{D,\,XD\},$
and the $\!L\!$-module $\mathbb{R}[x]).$
One finds that the subalgebra $A^{(1)}=[A,A]$ is spanned
by all the above operators except $XD.$
(In particular, $[D,\,XD]=D$ does appear.)
This subalgebra is not nilpotent, since $[D,X^n]=nX^{n-1},$
so that there are elements which can be bracketed with $D$
arbitrarily many times before going to zero.
At the next step, however,
$A^{(2)}=[A^{(1)},A^{(1)}]$ is spanned by the operators $X^n$ only,
and hence has zero bracket operation, so $A^{(3)}=\{0\},$
showing that $A$ is solvable, even though $A^{(1)}$ is not nilpotent.

To get a finite-dimensional example in positive characteristic, let
us first note a variant of the above characteristic~$0$ example.
Consider the ring of functions $\mathbb{R}[x,\,e^x],$ and the space
of operators on that ring spanned by $D$ and $XD$ as above, together
with (rather than the operators $X^n)$ the operators
$X^n\,Y$ $(n\geq 0),$
where $Y$ is the operator of multiplication by $e^x.$
Again, one verifies that this is closed under commutator
brackets, and so gives a Lie algebra $A$
(the semidirect product of $L$ as above and
the $\!L\!$-module $\mathbb{R}[x]\,e^x).$
Where in the preceding example, the infinite-dimensionality of
$\{X^0,\,X^1,\,X^2,\dots\}$ was involved in establishing the
non-nilpotence of $A^{(1)},$ here non-nilpotence follows from the
single relation
\begin{equation}\begin{minipage}[c]{35pc}\label{d.DX0Y}
$[D,\,X^0\,Y]=X^0\,Y.$
\end{minipage}\end{equation}
This does not allow us to pass to a finite-dimensional
subalgebra with the desired properties, because the iterated
action of $XD$ on $X^0\,Y$ brings in all the $X^n\,Y.$
However, one finds that the structure constants of this
Lie algebra with respect to our basis are integers, and
that when one reduces them modulo a prime $p,$ then the span
of $\{X^p\,Y,\,X^{p+1}Y,\,\dots\}$ becomes an ideal.
(Key calculation: in the original algebra,
$[D,\,X^pY]=pX^{p-1}Y+X^pY,$ and modulo $p,$
the first term of that expression vanishes.)
The factor-algebra by that ideal
is thus a $\!(p+2)\!$-dimensional Lie algebra $B,$ and the
relation~(\ref{d.DX0Y}) shows that $B^{(1)}$ is still not nilpotent.
However, we find that $B^{(1)}$ again loses the operator $XD,$
that $B^{(2)}$ likewise loses $D,$ hence has zero bracket operation,
so that again $B^{(3)}=\{0\}$ and $B$ is solvable.
Further examples in prime characteristic may
be found in~\cite{KB+DAT}.

So if Corollary~\ref{C.solv} is to be extended
to positive characteristic, or
to inverse limits of not necessarily finite-dimensional Lie algebras,
or to non-Lie algebras, a very different proof will be needed.

One can, of course, generalize that corollary and its
present proof by strengthening
the hypothesis to {\em assume} $A$ is an inverse limit
of Lie algebras for which $A^{(1)}$ is nilpotent.
Indeed, one can generalize the resulting statement to arbitrary
algebras, replacing solvability by any condition specifying that
the values of a given family of algebra terms should generate
a nilpotent subalgebra.

\section{Possible variants of our main theorem.}\label{S.variants}

Let us look at a few ways Theorem~\ref{T.main}
can, or might, be generalized.

We start with one that, as a generalization, proves disappointing;
but which shows that our present
Theorem~\ref{T.main} is stronger than we realized.

\subsection{General limits.}\label{S2.genlim}
Recall that the concept of the inverse limit of an inversely directed
system of algebraic structures is a case of the more general
category-theoretic notion of the ``limit'' of a functor
\cite[\S\S7.6]{245} \cite[\S III.4]{CW}, other important examples
of which are the fixed-point algebra of a group acting
on an algebra, and the equalizer of a pair of algebra homomorphisms.
If one examines the proof of Theorem~\ref{T.main}, one sees no
reason why it should not work for limits in this general sense.
It does -- but that extension gives nothing new:

\begin{lemma}\label{L.gen_lims}
For a $\!k\!$-algebra $A,$ the following conditions are equivalent.

\textup{(i)}~ $A$ can be written as the limit of a system of
nilpotent $\!k\!$-algebras indexed by a small category.

\textup{(ii)}~ $A$ is pro-nilpotent, i.e.,
can be written as an inverse limit of an
inversely directed system of nilpotent $\!k\!$-algebras.
\end{lemma}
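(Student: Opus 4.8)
The plan: (ii)$\implies$(i) is immediate, since an inversely directed partially ordered set is in particular a small category, so I would concentrate on (i)$\implies$(ii). Write $A=\lm_{\mathcal{C}}D$ for the limit of a diagram $D$ carrying each object $c$ of a small category $\mathcal{C}$ to a nilpotent algebra $D(c)$ and each morphism $\phi$ to a homomorphism $D(\phi)$. I would use the standard realization of this limit as the subalgebra of $\prod_{c}D(c)$ consisting of those tuples $(a_c)_c$ with $D(\phi)(a_{\mathrm{dom}\,\phi})=a_{\mathrm{cod}\,\phi}$ for every morphism $\phi$ of $\mathcal{C}$.

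Two elementary facts about nilpotence would do the work. First, a subalgebra $S$ of a nilpotent algebra $N$ is nilpotent: an easy induction from~(\ref{d.weak_series}) gives $S_{[n]}\subseteq N_{[n]}$, so condition~(i) of Lemma~\ref{L.nilp} passes to subalgebras. Second, a \emph{finite} product $P$ of nilpotent algebras is nilpotent: the same style of induction shows $P_{[n]}$ is contained in the product of the $n$-th terms of the factors, which vanishes once $n$ exceeds every factor's nilpotence index. (Finiteness is essential here.) Combining the two, the limit of a diagram indexed by only \emph{finitely} many objects and morphisms --- being a subalgebra of a finite product of nilpotent algebras --- is itself nilpotent.

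This is the lever. First I would let $\mathcal{F}$ be the set of pairs $s=(O_s,M_s)$ with $O_s$ a finite set of objects and $M_s$ a finite set of morphisms having domains and codomains in $O_s$ (a set, since $\mathcal{C}$ is small), and for each $s$ let $L_s\subseteq\prod_{c\in O_s}D(c)$ be cut out by the equations $D(\phi)(a_{\mathrm{dom}\,\phi})=a_{\mathrm{cod}\,\phi}$ for $\phi\in M_s$; by the previous paragraph every $L_s$ is nilpotent. Ordering $\mathcal{F}$ by reverse inclusion makes it inversely directed (the union of two pairs is a common refinement), with connecting maps given by restriction of tuples, which land in the right $L_s$ because shrinking $M_s$ only deletes constraints. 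I would then identify $\lm_{\mathcal{F}}L_s$ with $A$: a compatible family is determined by a single tuple in $\prod_c D(c)$ (read off from the singleton pairs), and demanding that it lie in every $L_s$ is exactly the demand that $D(\phi)(a_{\mathrm{dom}\,\phi})=a_{\mathrm{cod}\,\phi}$ hold for all $\phi$, since each $\phi$ occurs in some $M_s$. Hence $A=\lm_{\mathcal{F}}L_s$ is an inverse limit of nilpotent algebras, i.e.\ pro-nilpotent in the sense of Definition~\ref{D.pro-np}.

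The point to watch --- and the reason the argument cannot be shortened to ``$A$ is a subalgebra of the pro-nilpotent product $\prod_c D(c)$'' --- is that a subalgebra of a pro-nilpotent algebra need \emph{not} be pro-nilpotent; the algebra $A^*$ discussed after Example~\ref{E.y-xy+yx} is exactly such a counterexample. What saves the present situation is that a limit, unlike an arbitrary subalgebra, is manifestly the inverse limit of its finite sub-limits, each of which is nilpotent by the finite-product argument. So I expect no single hard step; the substance is the recognition that passing to finite sub-diagrams converts an arbitrary small-category limit into an honest inverse limit of nilpotent algebras.
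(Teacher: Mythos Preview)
Your proof is correct and follows essentially the same route as the paper's: reduce to finite sub-diagrams, observe that each such limit is a subalgebra of a finite product of nilpotent algebras and hence nilpotent, and order these finite pieces by reverse inclusion to obtain an inversely directed system with the same limit. The only cosmetic difference is that the paper indexes by finite sets of \emph{objects} (taking the full subcategory thereon), whereas you index by finite sets of objects together with finite sets of morphisms; both work, and your closing remark about $A^*$ is a nice touch.
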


\begin{proof}[Sketch of proof]
Clearly, (ii)$\implies$(i).

Conversely, suppose $F:\mathbf{C}\to\mathbf{Alg}_k$ is a functor
from a small category $\mathbf{C}$ to the category $\mathbf{Alg}_k$
of not-necessarily-associative $\!k\!$-algebras, such
that for all $X\in\mathrm{Ob}(\mathbf{C}),$ $F(X)$ is nilpotent.

Let $I$ be the partially ordered set of finite subsets of
$\mathrm{Ob}(\mathbf{C}),$ ordered by reverse inclusion;
clearly, $I$ is inversely directed.
For each $i\in I,$ let $\mathbf{C}_i$ be the full subcategory
of $\mathbf{C}$ with object-set $i,$ and let
$A_i=\lm\,(F\,|\,\mathbf{C}_i),$ where $F\,|\,\mathbf{C}_i$ denotes
the restriction of $F$ to $\mathbf{C}_i.$

Each $A_i$ is a subalgebra of the finite product
$\prod_{X\in i} F(X),$ and the class of nilpotent
algebras is closed under finite products and
subalgebras, hence each $A_i$ is nilpotent.
Given $i\leq j\in I,$ which by our ordering of $I$
means $i\supseteq j,$ the inclusion
$\mathbf{C}_j\subseteq\mathbf{C}_i,$ induces a restriction
homomorphism $A_i\to A_j.$
It is straightforward to verify that $\lm F=\lm_I A_i,$ yielding~(ii).
\end{proof}

\subsection{Variant sorts of nilpotence.}\label{S2.varnilp}

Within the multiplier algebra $M(A)$ of an algebra $A,$ we may
look at the subalgebra $M_l(A)$ generated by the left multiplication
operators $l_x,$ and the subalgebra $M_r(A)$ generated by
the right multiplication operators $r_x.$

If $A$ is associative, these give nothing very new:
$M_l(A)$ is isomorphic to the factor-algebra of $A$
by its left annihilator ideal $\{x\in A\mid xA=\{0\}\},$
and $M_r(A)$ is antiisomorphic to the factor-algebra
of $A$ by the analogous right annihilator ideal;
so each is nilpotent if and only if $A$ is.
If, rather, $A$ is anticommutative (e.g., is a Lie algebra)
or is commutative (e.g., is a Jordan algebra), then $M_l(A)$
and $M_r(A)$ coincide with $M(A).$

But for a general nonassociative algebra $A,$ these two
subalgebras of $M(A)$ can look very different.
For instance, for the algebra with
multiplication~(\ref{d.xwi...}), it is easy to see that $(AA)A=\{0\},$
so that $M_r(A)^2=\{0\},$ but that $M_l(A)^n\neq \{0\}$ for all $n.$

The conditions $(\exists\,n)\,M_l(A)^n=\{0\}$
and $(\exists\,n)\,M_r(A)^n=\{0\}$ are known
as {\em left} nilpotence and {\em right} nilpotence \cite{AMSl}.
An algebra can be both left and right nilpotent
without being nilpotent, as shown by the algebra with
basis $x, w_0, w_1,\dots\,,$ and multiplication
\begin{equation}\begin{minipage}[c]{35pc}\label{d.xw2i2i+1}
$x\,w_{2i}\ =\ w_{2i+1},\quad w_{2i+1}\,x\ =\ w_{2i+2},$\quad
all other products of basis elements being zero.
\end{minipage}\end{equation}

The development of Theorem~\ref{T.main} goes over,
with no change, with the condition of left nilpotence or of
right nilpotence in place of the condition of nilpotence!
It is not clear to me what the most useful common generalization
of these various sorts of nilpotence is, so I leave it to the experts
in nonassociative rings to develop that observation further.

Let us record a few other versions of nilpotence,
corresponding to still other subalgebras of $M(A).$

Given any $\alpha,\beta\in k,$
one can define a new multiplication on any $\!k\!$-algebra $A$ by
\begin{equation}\begin{minipage}[c]{35pc}\label{d.x*y}
$x*y\ =\ \alpha\,x\,y+\beta\,y\,x,$
\end{minipage}\end{equation}
(from which the original multiplication is recoverable by a
transformation of the same form if $\alpha^2-\beta^2$
is invertible in $k).$
Left nilpotence of this operation is a property of $A$ that is not,
in general,
equivalent to either nilpotence, left nilpotence, or right nilpotence
of the original operation; but any results on
left nilpotence of a general algebra will necessarily apply to
left nilpotence of this operation.

Recall next that for any algebra $A$ one can define the family of
{\em associator} operations by
\begin{equation}\begin{minipage}[c]{35pc}\label{d.associator}
$a_{x,z}(y)\ =\ x(yz)-(xy)z\qquad (x,y,z\in A).$
\end{minipage}\end{equation}
Hence we may consider the subalgebra $M_a(A)\subseteq M(A)$ generated by
all these maps, and study algebras $A$ for which $M_a(A)$ is nilpotent.
Does the fact that the generating set of maps $\{a_{x,z}\mid x,z\in A\}$
is not a linear image of $A$ but a bilinear image of $A\times A$
affect the usefulness of this construction?
I don't know.

Finally, note that to every finite binary tree with $n$ leaves, one
can associate a way of bracketing $n$ symbols, and hence a way of
associating to every algebra $A$ a derived $\!n\!$-ary operation.
Various nilpotence-like conditions can be expressed conveniently in
terms of this formalism.
Thus, an algebra $A$ is left nilpotent if and only if for some $n,$ the
$\!n+1\!$-ary operation induced by the length-$\!n\!$ right-branching
chain is zero on $A;$ right nilpotent,
likewise, if and only if for some $n,$ the
operation induced by the length-$\!n\!$ left-branching chain is zero.
(Here we call a tree a ``chain'' if after pruning all leaves, it has
the form usually called a chain.)
An algebra $A$ is nilpotent if and only if for some $n,$ the operations
induced by all length-$\!n\!$ chains are zero; equivalently, if
and only if for
some $n'$ the operations induced by all trees with $n'$ leaves are zero.
An algebra is solvable if and only if for some $n$ the operation
induced by the full depth-$\!n\!$ binary tree (with $2^{n+1}-1$
nodes) is zero.
One might put these conditions into a general framework by associating
conditions on algebras to appropriate filters of subsets of the
set of finite binary trees.

\subsection{What about restricted Lie algebras?}\label{S2.p-Lie}
Over a field $k$ of characteristic $p>0,$ a more useful concept than
that of a Lie algebra is that of a {\em restricted} Lie
algebra or {\em $\!p$-Lie algebra}\/: a Lie algebra given with
an additional operation, $x\mapsto x^{(p)},$ satisfying certain
identities which, in associative $\!k\!$-algebras,
relate the $\!p$-th power map with the
$\!k\!$-module structure and commutator brackets.
Though $\!p$-Lie algebras are
not algebras in the sense of this note, it would, of course,
be of interest to know whether versions of our
results hold for these objects.

\subsection{What about groups?}\label{S2.groups}
The relation between nilpotence of Lie algebras over
$\mathbb{R}$ and $\mathbb{C},$ and
nilpotence in the sense of group theory of the corresponding
Lie groups, makes it natural to ask whether the methods and results
of this note have analogs for groups $G$ (not necessarily Lie).

In view of the way the brackets of a Lie algebra are related
to the group operation, the natural analogs of the maps
$r_x$ and $l_x$ in the above development would seem to be
the commutator maps $c_x(y)=x^{-1}y^{-1}x\,y.$
The analog of ``quasiinvertibility'' for a map $u:G\to G$ might
be invertibility of the set-map $g\mapsto g\,u(g)$ of $G$ to itself.
But it is not clear under what operations it would be natural to close
the set of commutator maps to form the analog of $M(A),$ and whether
this (or any method) will lead to an analog of Theorem~\ref{T.main}.

\section{Questions.}\label{S.questions}
Topics for further investigation have been noted above.
Here are some more specific questions.

Regarding the chain of conditions in~\S\ref{S.chain}, we ask

\begin{question}\label{Q.chain}
\textup{(i)}~ For $A$ an algebra, is the implication
\textup{(\ref{d.mxsurj})$\!\implies\!$(\ref{d.surj})} reversible?
More generally, if an associative nonunital algebra $R$ has
a module $A$ such that for each $r\in R,$ the operator
$1+r$ is surjective on $A,$ does the action of
$\mathrm{Mat}_n(R)$ on the direct sum of $n$
copies of $A$ have the same property?

\textup{(ii)}~ For $A$ an algebra which
is Hopfian as a $\!k\!$-module, is any of the the implications
\textup{(\ref{d.rad})$\!\implies\!$(\ref{d.M_in_rad})$\!\implies\!$%
(\ref{d.mxsurj})$\!\implies\!$(\ref{d.surj})} reversible?
\end{question}

Examples~\ref{E.y-yy} and~\ref{E.ES+PMC} show that a
homomorphic image of a pro-nilpotent algebra can contain a simple
subalgebra, and so, in particular, an idempotent subalgebra.
This leaves open the question

\begin{question}\label{Q.idpt}
Can a nonzero homomorphic image $B$ of a pro-nilpotent algebra $A$
over a field $k$ be idempotent?
Simple?
If so, can this happen when our algebras are associative?
\end{question}

Of course, by Theorem~\ref{T.main}(iii), such a $B$ cannot
be finite-dimensional and by Lemma~\ref{L.idpt}(i)-(ii), if our
algebras are associative, $B$ cannot be finitely generated.
For the case where $k$ is not, as assumed above, a field,
Corollary~\ref{C.not_idpt} gives a somewhat weaker restriction.

We have seen ways in which Lie algebras behave like
associative algebras (Lemma~\ref{L.[]=()}), and ways
in which they differ (the contrast between
Lemma~\ref{L.idpt}(i) and Example~\ref{E.y-xy+yx}).
The next question notes some cases where it isn't
clear on which side of the fence Lie algebras will fall.

\begin{question}\label{Q.Lie_idpt}
Can a homomorphic image $B$ of a pro-nilpotent Lie algebra
have a nonzero finitely generated idempotent subalgebra?

If so, can it have a nonzero finitely generated simple subalgebra?

If so, can such a subalgebra be finite-dimensional?
\end{question}

(A curious difference between the behaviors of Lie and
associative algebras is noted in \cite[Example~25.49]{coalg}, where
it is observed that a topological Lie algebra (over a field) with
a linearly compact topology need not be an inverse limit of
finite-dimensional Lie algebras.
The example is the Lie algebra spanned
by $\mathbb{R}[x]$ and $d/dx.$
Under the duality between vector spaces and linearly compact
vector spaces, this shows that the ``Fundamental
theorem on coalgebras'', a result on coassociative coalgebras,
is not valid for co-Lie-algebras.)

In~\S\ref{S.solv}, where we considered solvable Lie algebras,
we raised

\begin{question}\label{Q.solv}
In Corollary~\ref{C.solv}, is it possible to remove or weaken
\textup{(i)}~the condition that the $A_i$ be finite-dimensional, or
\textup{(ii)}~the condition of characteristic~$0,$ or
\textup{(iii)}~the condition that the algebras be Lie?
\end{question}

In~\cite{prod_Lie1} and~\cite{prod_Lie2}, N.\,Nahlus
and the present author study homomorphic
images of {\em direct product} algebras $\prod_I A_i.$
The form of the results obtained there suggest some possible
strengthenings of Theorem~\ref{T.main}(iii):

\begin{question}\label{Q.cf_prod_Lie}
In Theorem~\ref{T.main}\textup{(iii)}, if $k$ is a field
\textup{(}or perhaps, more restrictively, an infinite field\textup{)},
can the hypothesis that $B$ is finite-dimensional
\textup{(}the form that the finite-length hypothesis takes for vector
spaces\textup{)} be weakened to {\em countable}-dimensional?
\textup{(}Cf.~\cite[Theorem~11]{prod_Lie1},
\cite[Theorem~8]{prod_Lie2}.\textup{)}

For any algebra $B,$ let us
write $Z(B)$ for the ideal $\{b\in B\mid b\,B=B\,b=\{0\}\,\}.$
Then if $k$ is infinite and $\mathrm{card}(I)$ is less than
any uncountable measurable cardinal, can the conclusion of
Theorem~\ref{T.main}\textup{(iii)} be strengthened to say that
the composite map $A\to B\to B/Z(B)$ factors through
one of the projections $p_i:A\to A_i$ \textup{(}equivalently,
is continuous in the pro-discrete topology\textup{)}?
Without those cardinality hypotheses, can we say that
$B/Z(B)$ is a homomorphic image of one of the $A_i$?
\textup{(}Cf.~\cite[Proposition~16]{prod_Lie1}.\textup{)}
\end{question}

The concept of measurable cardinal is reviewed
in~\cite[\S15]{prod_Lie1}.
The need, in the second paragraph of the above question, for
the cardinality conditions and for the denominator ``$Z(B)$''
arises from the need for these same restrictions in~\cite{prod_Lie1}
and~\cite{prod_Lie2}.
Indeed, an infinite direct product of algebras is an
inverse limit of finite subproducts, so counterexamples
to statements for infinite products in the absence of those
restrictions are also counterexamples for inverse limits.

Thinking about the counterexamples in \S\ref{S.eg},
and the differences between the kinds of examples that can exist
for associative and for nonassociative algebras, suggested

\begin{question}\label{Q.assoc_im}
If an associative algebra $B$ can be written as a homomorphic
image of a pro-nilpotent algebra, can it be written as
a homomorphic image of an associative pro-nilpotent algebra?

Same question, with associativity replaced by an arbitrary
identity or set of identities.
\end{question}


\end{document}